\newtheorem{theorem}{Theorem}
\newtheorem{lemma}[theorem]{Lemma}
\newtheorem{conjecture}[theorem]{Conjecture}
\newtheorem{proposition}[theorem]{Proposition}
\theoremstyle{definition}
\newtheorem{definition}[theorem]{Definition}
\theoremstyle{remark}
\numberwithin{equation}{section}
\newcommand{\D}{\mathbb{D}}
\newcommand{\DD}{\widehat{\mathcal{D}}}
\newcommand{\Dd}{\widecheck{\mathcal{D}}}
\newcommand{\DDD}{\mathcal{D}}
\newcommand{\De}{\mathcal{D}}
\newcommand{\N}{\mathbb{N}}
\newcommand{\C}{\mathbb{C}}
\newcommand{\e}{\varepsilon}
\renewcommand{\phi}{\varphi}
\newcommand{\T}{\mathbb{T}}
       \def\b{\beta}        
\def\d{\delta}       \def\De{{\Delta}}    \def\e{\varepsilon}
     \def\om{\omega}      
              \def\f{\phi}
\def\p{\psi}         \def\r{\rho}         \def\z{\zeta}
                  \def\vp{\varphi}
\def\G{\Gamma}
\renewcommand{\H}{\mathcal{H}}
\begin{document}
\title[Compact differences of composition operators on Bergman spaces]{Compact differences of composition operators on Bergman spaces induced by doubling weights}


\keywords{Bergman space, Carleson measure, doubling weight, composition operator}

\author{Bin Liu}
\address{University of Eastern Finland, P.O.Box 111, 80101 Joensuu, Finland}
\email{binl@uef.fi}

\author{Jouni R\"atty\"a}
\address{University of Eastern Finland, P.O.Box 111, 80101 Joensuu, Finland}
\email{jouni.rattya@uef.fi}

{\author{Fanglei Wu}
\address{Department of Mathematics, Shantou University, Shantou, Guangdong 515063, China}
\address{University of Eastern Finland, P.O.Box 111, 80101 Joensuu, Finland}
\email{fangleiwu1992@gmail.com}}

\thanks{The first author is supported by China Scholarship Council No. 201706330108. The third author is supported by NNSF of China ( No. 11720101003).and NSF of Guangdong Province (No. 2018A030313512). This research were carried over when the third author visited University of Eastern Finland for 12 months. He wishes to express his gratitude to the stuff and the faculty of the Department of Physics and Mathematics for their hospitality during his visit.}

\begin{abstract}
Bounded and compact differences of two composition operators acting from the weighted Bergman space $A^p_\om$ to the Lebesgue space $L^q_\nu$, where $0<q<p<\infty$ and $\om$ belongs to the class~$\DDD$ of radial weights satisfying a two-sided doubling condition, are characterized. On the way to the proofs a new description of $q$-Carleson measures for $A^p_\om$, with $p>q$ and $\om\in\DDD$, involving pseudohyperbolic discs is established. This last-mentioned result generalizes the well-known characterization of $q$-Carleson measures for the classical weighted Bergman space $A^p_\alpha$ with $-1<\alpha<\infty$ to the setting of doubling weights. The case $\om\in\DD$ is also briefly discussed and an open problem concerning this case is posed.
\end{abstract}

\maketitle

\section{Introduction and main results}

Each analytic self-map $\vp$ of the unit disc $\D=\{z\in\C:|z|<1\}$ induces the composition operator $C_\varphi$, defined by $C_\varphi f=f\circ\varphi$, acting on the space $\H(\D)$ of analytic functions in~$\D$. These operators have been extensively studied in a variety of function spaces, see for example~\cite{CCM,SJH1,SJH2,Z}. Some studies on topological properties of the space of composition operators attracted attention towards differences of composition operators. The question of when the difference $C_\f-C_\p$ of two composition operators is compact on the Hardy space~$H^p$ was posed by Shapiro and Sundberg~\cite{SS}. Differences of composition operators have been studied ever since by many authors on several function spaces, see, for example,~\cite{AW,CKP,GTE,LR2020}. In~2004, Nieminen and Saksman ~\cite{NS} showed that the compactness of $C_\f-C_\p$ on the Hardy space~$H^p$, with $1\leq p<\infty$, is independent of $p$. By using this, Choe, Choi, Koo and Yang~\cite{CB} then characterized compact operators $C_\f-C_\p$ on Hardy spaces by using Carleson measures in~2020. Further, Moorhouse~\cite{MJ,MT} characterized the compactness of $C_\f-C_\p$ on the standard weighted Bergman space $A^2_{\alpha}$. Saukko~\cite{SE,SEO} generalized Moorhouse's results by characterizing the compactness from $A^p_\alpha$ to $A^q_\beta$ if either $1<p\le q$, or $p>q\ge1$. Very recently, Shi, Li and Du ~\cite{SLD} extended Saukko's results for the complete range $0<p,q<\infty$ and to higher dimensions.

In this paper we are interested in the compactness of $C_\f-C_\p$ on weighted Bergman spaces induced by doubling weights. We proceed with necessary definitions. For $0<p<\infty$ and a positive Borel measure $\nu$ on $\D$, the Lebesgue space~$L^p_\nu$ consists of complex valued $\nu$-measurable functions $f$ on $\D$ such that
    $$
    \|f\|_{L^p_\nu}^p=\int_\D|f(z)|^p\,d\nu(z)<\infty.
    $$
If $\nu$ is continuous with respect to the Lebesgue measure, that is, $d\nu(z)=\om(z)dA(z)$ for some non-negative function $\om$, then we adopt the notation $L^p_\nu=L^p_\om$ without arising any confusion. For a nonnegative function $\om\in L^1([0,1))$, its extension to $\D$, defined by $\om(z)=\om(|z|)$ for all $z\in\D$, is called a radial weight. For $0<p<\infty$ and a radial weight $\omega$, the weighted Bergman space~$A^p_\omega$ consists of analytic functions in $L^p_\om$.  As usual,~$A^p_\alpha$ stands for the classical weighted Bergman space induced by the standard radial weight $\omega(z)=(1-|z|^2)^\alpha$, where $-1<\alpha<\infty$. For the theory of these spaces consult \cite{DS,HKZ}.

For a radial weight $\om$, write $\widehat{\om}(z)=\int_{|z|}^1\om(s)\,ds$ for all $z\in\D$. In this paper we always assume $\widehat{\om}(z)>0$, for otherwise $A^p_\om=\H(\D)$ for each $0<p<\infty$. A radial weight $\om$ belongs to the class~$\DD$ if
there exists a constant $C=C(\om)\ge1$ such that $\widehat{\om}(r)\le C\widehat{\om}(\frac{1+r}{2})$ for all $0\le r<1$.
Moreover, if there exist $K=K(\om)>1$ and $C=C(\om)>1$ such that $\widehat{\om}(r)\ge C\widehat{\om}\left(1-\frac{1-r}{K}\right)$ for all $0\le r<1$, then we write $\om\in\Dd$. In other words, $\om\in\Dd$ if there exists $K=K(\om)>1$ and $C'=C'(\om)>0$ such that
	$$
	\widehat{\om}(r)\le C'\int_r^{1-\frac{1-r}{K}}\om(t)\,dt,\quad 0\le r<1.
	$$
The intersection $\DD\cap\Dd$ is denoted by $\DDD$, and this is the class of weights that we mainly work with. For recent developents on Bergman spaces induced by weights in $\DD$, see \cite{PelRat2020} and the reference therein.

In this paper we consider compact differences of two composition operators from the weighted Bergman space $A^p_\omega$ to the Lebesgue space $L^q_\nu$ when $0<p,q<\infty$ and $\omega\in\DD$. To state the first main result, write
		\begin{equation*}
    \delta(z)=\frac{\psi(z)-\varphi(z)}{1-\overline{\psi(z)}\varphi(z)},\quad z\in\D.
    \end{equation*}
The next result generalizes \cite[Theorem 1.2]{SEO} to the setting of doubling weights.

\begin{theorem}\label{Theorem:main-1}
Let $0<q<p<\infty$ and $\om\in\DDD$, and let $\nu$ be a positive Borel measure on $\D$. Let $\f$ and $\p$ be analytic self-maps of $\D$. Then the following statements are equivalent:
\begin{itemize}
\item[\rm(i)] $C_\f-C_\p:A^p_\om\rightarrow L^q_\nu$ is bounded;
\item[\rm(ii)] $C_\f-C_\p:A^p_\om\rightarrow L^q_\nu$ is compact;
\item[\rm(iii)] $\d C_\f$ and $\d C_\p$ are compact (or equivalently bounded) from $A_\om^p$ to $L^q_\nu$.
\end{itemize}
\end{theorem}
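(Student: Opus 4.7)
The plan is to establish (ii)$\Rightarrow$(i) trivially, (iii)$\Rightarrow$(ii), and the reverse implication (i)$\Rightarrow$(iii), treating the parenthetical equivalence in (iii) as follows: by the Carleson-measure calculus in the regime $p>q$, boundedness of each of $\delta C_\varphi,\delta C_\psi$ from $A^p_\om$ to $L^q_\nu$ is equivalent to the weighted pullback measures $d\mu_\varphi=|\delta|^q\,d(\nu\circ\varphi^{-1})$ and $d\mu_\psi=|\delta|^q\,d(\nu\circ\psi^{-1})$ being $q$-Carleson measures for $A^p_\om$, and by the new pseudohyperbolic-disc characterization of such measures announced in the abstract, this automatically upgrades boundedness to compactness when $p>q$.

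For (iii)$\Rightarrow$(ii) I would first establish a pointwise bound of the shape
\begin{equation*}
|f(\varphi(z))-f(\psi(z))|\;\le\; C\,|\delta(z)|\bigl(\mathcal{M}_\varphi f(z)+\mathcal{M}_\psi f(z)\bigr),\qquad f\in \H(\D),
\end{equation*}
where $\mathcal{M}_\varphi f(z)$ denotes the $L^p_\om$-average of $|f|$ on a pseudohyperbolic disc of fixed radius centered at $\varphi(z)$ (and similarly for $\mathcal{M}_\psi f$). Such an estimate is obtained by integrating $f'$ along the pseudohyperbolic segment joining $\varphi(z)$ and $\psi(z)$, followed by the sub-mean-value property of $|f|^p$ on pseudohyperbolic discs. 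Raising to the $q$-th power, integrating against $\nu$, and applying the pseudohyperbolic-disc Carleson description to the averaged densities reduces matters to the assumed boundedness of $\delta C_\varphi$ and $\delta C_\psi$. A standard normal-family argument then promotes boundedness to compactness.

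The heart of the proof is (i)$\Rightarrow$(iii). Following the ``separation versus near-diagonal'' dichotomy already used in \cite{MJ,SEO}, I would fix $\e\in(0,1)$ and split $\D$ into $G_\e=\{z\in\D:|\delta(z)|>\e\}$ and its complement. On $G_\e$ the evaluations $f\mapsto f(\varphi(z))$ and $f\mapsto f(\psi(z))$ are pseudohyperbolically separated by at least $\e$, so for each $a\in\D$ one constructs a normalized test function $f_a\in A^p_\om$---built from suitable powers of the normalized reproducing kernels of $A^2_\om$, whose size is controlled by the kernel estimates valid for $\om\in\DDD$ \cite{PelRat2020}---such that $|f_a(\varphi(z))|\gtrsim 1$ while $|f_a(\psi(z))|$ is negligible, uniformly for $z$ in a pseudohyperbolic disc around $a$. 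Feeding $f_a$ into the boundedness hypothesis on $C_\varphi-C_\psi$ and averaging over $a$ by means of the pseudohyperbolic-disc Carleson description then yields the Carleson condition for $\mu_\varphi$ on the part of $\D$ coming from $\varphi(G_\e)$; the symmetric choice of test functions handles $\mu_\psi$. On the complementary near-diagonal set the factor $|\delta|^q\le\e^q$ is already small and its contribution is absorbed using the pointwise estimate above together with the hypothesis.

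The main obstacle is precisely this near-diagonal regime $|\delta|\le\e$: test functions cannot separate $\varphi(z)$ from $\psi(z)$ there, so the argument has to pass through averaged rather than pointwise quantities. This is exactly what the new pseudohyperbolic-disc characterization of $q$-Carleson measures for $A^p_\om$ with $\om\in\DDD$ is designed to accommodate, since it allows one to replace pointwise evaluation by averages over pseudohyperbolic discs containing both $\varphi(z)$ and $\psi(z)$, and thereby to compare the two pullback measures $\mu_\varphi$ and $\mu_\psi$ up to a controlled multiplicative loss. Without this tool one would be forced to invoke the Berezin-transform description of Carleson measures used in \cite{SEO}, which is not available for general doubling weights.
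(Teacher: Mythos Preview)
Your outline for (iii)$\Rightarrow$(ii) is close to what the paper does (splitting into $\{|\delta|<r\}$ and its complement and using a local Schwarz--Pick type estimate on the near set), and the parenthetical equivalence in (iii) is handled exactly as you say.

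The genuine gap is in (i)$\Rightarrow$(iii). Testing with a single normalized kernel $f_a$ can only produce a uniform bound of the type
\[
\frac{\varphi_*(|\delta|^q\nu)\bigl(\Delta(a,r)\bigr)}{\omega(S(a))^{q/p}}\lesssim 1,
\]
which is the Carleson condition for $p\le q$, not for $p>q$. In the regime $p>q$ what you must show is that $a\mapsto \varphi_*(|\delta|^q\nu)(\Delta(a,r))/\omega(S(a))$ lies in $L^{p/(p-q)}_{\widetilde\omega}$, and ``averaging over $a$'' of single-kernel tests does not deliver this integrability. The paper's mechanism is different and essential: one tests with \emph{random} linear combinations
\[
F_b(z)=\sum_k b_k\Bigl(\frac{1-|z_k|}{1-\overline{z_k}z}\Bigr)^M\omega(T(z_k))^{-1/p},\qquad b=\{b_k\}\in\ell^p,
\]
built on a $\rho$-lattice $\{z_k\}$, uses $\|F_b\|_{A^p_\omega}\lesssim\|b\|_{\ell^p}$, randomizes the signs of $b_k$ and applies Khinchine's inequality. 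This turns the boundedness of $C_\varphi-C_\psi$ into the statement that the linear functional $b\mapsto\sum_k|b_k|^q\varphi_*(|\delta|^q\nu)(\Delta(z_k,\rho))\,\omega(T(z_k))^{-q/p}$ is bounded on $\ell^{p/q}$, whence the duality $(\ell^{p/q})^*\simeq\ell^{p/(p-q)}$ gives exactly the $\ell^{p/(p-q)}$ summability that feeds into Theorem~\ref{Theorem:cm}. Your proposal contains no substitute for this step.

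Two further points. First, the far/near splitting is unnecessary for the necessity direction: the inequality
\[
\left|1-\Bigl(\frac{1-\overline a\,\varphi(z)}{1-\overline a\,\psi(z)}\Bigr)^\gamma\right|\gtrsim |a|\,|\delta(z)|,\qquad \varphi(z)\in\Delta(a,r),
\]
holds for \emph{all} $\psi(z)\in\D$, so one gets $|f_a(\varphi(z))-f_a(\psi(z))|\gtrsim|\delta(z)|$ directly without ever separating $\varphi(z)$ from $\psi(z)$. Second, your near-diagonal argument is circular: it appeals to ``the pointwise estimate above together with the hypothesis'', but that pointwise estimate bounds $|f\circ\varphi-f\circ\psi|$ in terms of $|\delta|$, which goes in the wrong direction for proving boundedness of $\delta C_\varphi$ from boundedness of $C_\varphi-C_\psi$.
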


The proof of Theorem~\ref{Theorem:main-1} is organized as follows. We first show that $C_\f-C_\p$ is compact if $\d C_\f$ and $\d C_\p$ are bounded. The proof of this implication is straightforward and relies on the fact that the norm of $f\in\H(\D)$ in $A^p_\om$ is comparable to the $L^p_\om$-norm of the non-tangential maximal function $N(f)(z)=\sup_{\z\in\Gamma(z)}|f(\z)|$, where 
	$$
	\Gamma(z)=\left\{\z\in\D:|\theta-\arg \z|<\frac{1}{2}\left(1-\frac{|\z|}{r}\right)\right\},\quad z=re^{i\theta}\in\overline{\D}\setminus\{0\},
	$$ 
is a non-tangential approach region with vertex at $z$. Then, as each compact operator is bounded, the proof boils down to showing that $\d C_\f$ and $\d C_\p$ are compact whenever $C_\f-C_\p$ is bounded. This part of the proof is more laborious. We first observe that for each $\r$-lattice $\{z_k\}$ the function
	$$
	z\mapsto\sum_kb_k\left(\frac{1-|z_k|}{1-\overline{z_k}z}\right)^M\frac{1}{(\omega(T(z_k)))^{\frac{1}{p}}}
	$$
belongs to $A^p_{\omega}$ for all $b=\{b_k\}\in\ell^p$ and its $A^p_\om$-norm is dominated by a universal constant times~$\|b\|_{\ell^p}$. Then we use this function for testing and apply Khinchine's inequality. After duality arguments we finally arrive to a situation where we must understand well the continuous embeddings $A^p_\om\subset L^q_\mu$. Recall that a positive Borel measure $\mu$ on $\D$ is a $q$-Carleson measure for $A^p_\om$ if the identity operator $I:A^p_\om\to L^q_\mu$ is bounded. A complete characterization of such measures in the case $\om\in\DD$ can be found in \cite{PelRatEmb}, see also \cite{PelSum14,PR,PRS}. In particular, it is known that if $p>q$ and $\om\in\DD$, then $\mu$ is a $q$-Carleson measure for $A^p_\om$ if and only if the function
	\begin{equation}\label{kalle}
	\z\mapsto\int_{\Gamma(\z)}\frac{d\mu(z)}{\om(T(z))}
	\end{equation}
belongs to $L^\frac{p}{p-q}_\om$. Here and from now on $T(z)=\{\z\in\D:z\in\Gamma(\z)\}$ is the tent induced by $z\in\D\setminus\{0\}$. Further, $\om(E)=\int_E\om dA$ for each measurable set $E\subset\D$. Observe that in \eqref{kalle} we may replace the tent $T(z)$ by the Carleson square $S(z)=\{\z:1-|z|<|\z|<1,\,|\arg\z-\arg z|<(1-|z|)/2\}$ because $\om(T(z))\asymp\om(S(z))$ for all $z\in\D\setminus\{0\}$ if $\om\in\DD$. To complete the proof of Theorem~\ref{Theorem:main-1} we will need a variant of the above characterization of Carleson measures in the case $\om\in\DDD$. Our statement involves pseudohyperbolic discs and an auxiliary weight associated to $\om$. The pseudohyperbolic distance between two points $a$ and $b$ in $\D$ is $\rho(a,b)=|(a-b)/(1-\overline{a}b)|$. For $a\in\D$ and $0<r<1$, the pseudohyperbolic disc of center $a$ and of radius $r$ is $\Delta(a,r)=\{z\in \D:\rho(a,z)<r\}$. It is well known that $\Delta(a,r)$ is an Euclidean disk centered at $(1-r^2)a/(1-r^2|a|^2)$ and of radius $(1-|a|^2)r/(1-r^2|a|^2)$. We denote $\widetilde{\omega}(z)=\widehat{\omega}(z)/(1-|z|)$ for all $z\in\D$ and note that
	\begin{equation}\label{eq:equivalent norms}
	\|f\|_{A^p_{\widetilde{\om}}}\asymp\|f\|_{A^p_\om},\quad f\in\H(\D),
	\end{equation}
provided $\om\in\DDD$, by \cite{PelRat2020}. Our embedding theorem generalizes the case $n=0$ of \cite[Theorem~1]{L1993} to doubling weights and reads as follows.

\begin{theorem}\label{Theorem:cm}
Let $0<q<p<\infty$ and $\omega\in\mathcal{D}$, and let $\mu$ be a positive Borel measure on $\D$. Then the following statements are equivalent:
\begin{itemize}
\item[\rm(i)] $\mu$ is a $q$-Carleson measure for $A^p_{\omega}$;
\item[\rm(ii)] $I:A^p_\om\to L^q_\mu$ is compact;
\item[\rm(iii)] the function
	$$
	\Theta^\om_\mu(z)=\frac{\mu(\Delta(z,r))}{\omega(S(z))},\quad z\in\D\setminus\{0\},
	$$
belongs to $L_{\widetilde{\omega}}^{\frac{p}{p-q}}$ for some (equivalently for all) $r\in(0,1)$.
\end{itemize}
Moreover,
	\begin{equation}\label{operator norm}
	\|I\|_{A^p_\om\to L^q_\mu}^q\asymp\|\Theta^\om_\mu\|_{L_{\widetilde{\omega}}^{\frac{p}{p-q}}}.
	\end{equation}
\end{theorem}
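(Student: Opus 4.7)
The strategy is to prove the norm equivalence \eqref{operator norm} directly, which gives (i)$\Leftrightarrow$(iii) with the desired quantitative control. The implication (ii)$\Rightarrow$(i) is trivial, and (iii)$\Rightarrow$(ii) will be obtained by applying the quantitative form of (iii)$\Rightarrow$(i) to the restricted measures $\mu|_{\{|z|\ge r\}}$: since $\Theta^\om_{\mu|_{\{|z|\ge r\}}}\le\Theta^\om_\mu\in L^s_{\widetilde{\om}}$ and $\Theta^\om_{\mu|_{\{|z|\ge r\}}}\to 0$ pointwise as $r\to 1^-$, dominated convergence forces $\|I\|_{A^p_\om\to L^q_{\mu|_{\{|z|\ge r\}}}}\to 0$, and compactness then follows in the usual way from the fact that weakly null bounded sequences in $A^p_\om$ converge to $0$ uniformly on compacta. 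Throughout I write $s=p/(p-q)$ and fix a $\rho$-lattice $\{z_k\}\subset\D$ so that $\{\Delta(z_k,5\rho)\}$ covers $\D$ with bounded overlap.

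For (iii)$\Rightarrow$(i), I would start from the local bound
$$|f(z)|^q\lesssim\frac{1}{\widetilde{\om}(\Delta(z,r))}\int_{\Delta(z,r)}|f|^q\widetilde{\om}\,dA,\qquad z\in\D,$$
which follows from the subharmonicity of $|f|^q$ and the slow variation of $\widetilde{\om}$ for $\om\in\mathcal{D}$. Using this pointwise on the covering and swapping the order of integration gives
$$\int_\D|f|^q\,d\mu\lesssim\int_\D|f(w)|^q\widetilde{\om}(w)\,\Theta^\om_\mu(w;r')\,dA(w)$$
for a slightly enlarged parameter $r'$, and an application of Hölder with exponents $p/q$ and $s$, combined with \eqref{eq:equivalent norms}, yields $\int_\D|f|^q\,d\mu\lesssim\|f\|_{A^p_\om}^q\,\|\Theta^\om_\mu(\cdot;r')\|_{L^s_{\widetilde{\om}}}$. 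The change of radius is then absorbed by a Hardy--Littlewood-type maximal estimate on $L^s_{\widetilde{\om}}$, which is available since $s>1$ and $\widetilde{\om}\in\mathcal{D}$; the same estimate simultaneously shows that the condition in (iii) for some $r$ is equivalent to the one for all $r\in(0,1)$.

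For the converse (i)$\Rightarrow$(iii), I would test on the atomic-type functions announced in the introduction,
$$F_\varepsilon(z)=\sum_k\varepsilon_k b_k\Bigl(\frac{1-|z_k|}{1-\overline{z_k}z}\Bigr)^M\frac{1}{\om(T(z_k))^{1/p}},\qquad b=(b_k)\in\ell^p,$$
for sufficiently large $M$ and random signs $\varepsilon_k\in\{-1,1\}$, using the uniform bound $\|F_\varepsilon\|_{A^p_\om}\lesssim\|b\|_{\ell^p}$. The hypothesis gives $\int_\D|F_\varepsilon|^q\,d\mu\lesssim\|I\|^q\|b\|_{\ell^p}^q$, and averaging over the signs by Khinchine's inequality converts the integrand into an $\ell^2$-expression in the reproducing-kernel bumps. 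Noting that on each $\Delta(z_j,r)$ the $j$-th term of the sum dominates, because the lattice separation forces $|1-\overline{z_k}z|\gtrsim 1-|z_j|$ uniformly for $k\ne j$, one arrives at
$$\sum_j|b_j|^q\,\frac{\mu(\Delta(z_j,r))}{\om(T(z_j))^{q/p}}\lesssim\|I\|^q\|b\|_{\ell^p}^q,$$
and dualizing over $b\in\ell^p$ with $(p/q)'=s$ produces $\sum_j\mu(\Delta(z_j,r))^s/\om(T(z_j))^{s-1}\lesssim\|I\|^{qs}$. Using $\om(T(z_k))\asymp\om(S(z_k))$ for $\om\in\widehat{\mathcal{D}}$, this is precisely the discrete version of $\|\Theta^\om_\mu\|_{L^s_{\widetilde{\om}}}^s\lesssim\|I\|^{qs}$. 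The hard part will be the comparison between the continuous integral $\|\Theta^\om_\mu\|_{L^s_{\widetilde{\om}}}^s$ and its $\rho$-lattice discretization $\sum_k\Theta^\om_\mu(z_k)^s\om(S(z_k))$ for an arbitrary Borel measure $\mu$, since $\Theta^\om_\mu$ need not be slowly varying; this is again handled by the maximal inequality that governs the $r$-independence of (iii).
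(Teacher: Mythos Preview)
Your argument is essentially correct, and for (iii)$\Rightarrow$(i) it coincides with the paper's: the subharmonic/Fubini/H\"older chain works directly with the same radius $r$, so your appeal to a maximal inequality in that direction is superfluous. For the remaining parts your route diverges from the paper's in an interesting way.

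For (i)$\Rightarrow$(iii), the paper does not redo any Khinchine testing. It invokes the already-known Carleson characterization from \cite{PelRatEmb} and \cite{PRS} (together with \eqref{eq:equivalent norms} to pass to $A^p_{\widetilde\omega}$), which says that $B^{\widetilde\omega}_\mu(\zeta)=\int_{\Gamma(\zeta)}d\mu(z)/\widetilde\omega(T(z))$ lies in $L^{p/(p-q)}_{\widetilde\omega}$ with the right norm control. It then performs a short geometric comparison: for $z_K=(1-K(1-|z|))e^{i\arg z}$ one has $\Delta(z_K,r)\subset\Gamma(z)$, and the two-sided estimate \eqref{Eq:characterization-D} lets one replace $\widetilde\omega(T(\zeta))$ by $\omega(S(z_K))$ and $\widetilde\omega(z)$ by $\widetilde\omega(z_K)$, yielding $\|\Theta^\omega_\mu\|_{L^s_{\widetilde\omega}}\lesssim\|B^{\widetilde\omega}_\mu\|_{L^s_{\widetilde\omega}}\lesssim\|I\|^q$. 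For (i)$\Leftrightarrow$(ii) the paper simply quotes \cite[Theorem~3(iii)]{PRS}.

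What you propose for (i)$\Rightarrow$(iii) is exactly the Khinchine/duality scheme that the paper deploys \emph{later}, in the proof of Proposition~\ref{proposition:necessity-p>q}, specialized there to $\mu=\varphi_*(|\delta|^q\nu)$. Transplanting it into Theorem~\ref{Theorem:cm} works and makes the argument self-contained at the cost of length. One point to tighten: Khinchine with a $\rho$-lattice gives the discrete bound for the small radius (via disjointness or bounded overlap of the $\Delta(z_k,\rho)$), while the continuous-to-discrete comparison produces a larger radius $r'$. The passage from $r'$ back to $\rho$ is an elementary covering argument (each $\Delta(z_k,r')$ is covered by boundedly many $\Delta(z_j,\rho)$, then $(\sum_{i\le N}a_i)^s\le N^{s-1}\sum a_i^s$ and bounded multiplicity), not a genuine Hardy--Littlewood maximal estimate; the same device handles the $r$-independence of (iii). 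Your compactness argument via dominated convergence on $\mu|_{\{|z|\ge R\}}$ is a valid alternative to citing \cite{PRS}.
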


We may not replace $L_{\widetilde{\omega}}^{\frac{p}{p-q}}$ by $L_{{\omega}}^{\frac{p}{p-q}}$ in part (iii) of Theorem~\ref{Theorem:cm}. A counter example can be constructed as follows. Write $D(z,r)$ for the Euclidean disc $\{\z:|\z-z|<r\}$. Let $r_n=1-2^{-n}$ and $A_n=D(0,r_{n+1})\setminus D(0,r_n)$ for all $n\in\N$. Pick up an $\om\in\DDD$ such that it vanishes on $A_{2n}$ for all $n\in\N$. A simple example of a such weight is $\sum_{n\in\N}\chi_{A_{2n+1}}$.
Then choose $\mu$ such that for some $\e>0$ its support is contained in the union of the discs $\Delta(a_n,\e)$ which have the property that for some fixed $r\in(0,1)$ we have $\Delta(z,r)\subset A_{2n}$ for all $z\in \Delta(a_n,\e)$ and for all $n\in\N$. The choice $a_n=(r_{2n}+r_{2n+1})/2$ works if $0<r<1$ and $\e=\e(r)>0$ are sufficiently small. Then, for such an $r$, the norm $\|\Theta^\om_\mu\|_{L_{\omega}^{\frac{p}{p-q}}}$ vanishes and thus it cannot be comparable to $\|\Theta^\om_\mu\|_{L_{\widetilde{\omega}}^{\frac{p}{p-q}}}$ which is non-zero if $\mu$ is not a zero measure because $\widetilde{\om}$ is strictly positive. Moreover, by choosing $\mu$ appropriately the norm $\|\Theta^\om_\mu\|_{L_{\widetilde{\omega}}^{\frac{p}{p-q}}}$ can be made infinite.

The second main result of this study concerns the case when $p<q$ and $\om\in\DD$. It completes in part the main result in \cite{LR2020} which concerns the class $\DDD$ only. An analogue of this result was proved for the Hardy spaces in~\cite{SL}. Therefore Theorem~\ref{Theorem:main-2} takes care of the gap consisting of small Bergman spaces that exists between the Hardy and the standard weighted Bergman spaces.

\begin{theorem}\label{Theorem:main-2}
Let $0<p<q<\infty$ and $\om\in\DD$, and let $\nu$ be a positive Borel measure on $\D$. Let $\f$ and $\p$ be analytic self-maps of $\D$. Then $C_\f-C_\p:A^p_\om\rightarrow L^q_\nu$ is bounded (resp. compact) if and only if $\d C_\f$ and $\d C_\p$ are bounded (resp. compact) from $A_\om^p$ to $L^q_\nu$.
\end{theorem}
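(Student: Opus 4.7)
The theorem is an equivalence between two conditions at both the bounded and compact levels; I would prove each implication at the boundedness level, and then deduce the compactness version by the standard device of testing on sequences $\{f_n\}\subset A^p_\om$ that are uniformly bounded in norm and converge to zero uniformly on compact subsets of~$\D$ (these are exactly the weakly null sequences in $A^p_\om$).

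\textbf{Sufficiency.} Suppose $\d C_\f$ and $\d C_\p$ are bounded from $A^p_\om$ to $L^q_\nu$. The pivotal ingredient is the pointwise estimate
$$
|f(\f(z))-f(\p(z))|\lesssim |\d(z)|\,\bigl(M_r f(\f(z))+M_r f(\p(z))\bigr),\quad f\in\H(\D),\ z\in\D,
$$
where $M_r f(w)=\sup_{\z\in\Delta(w,r)}|f(\z)|$ for some fixed $r\in(0,1)$; this is obtained either by integrating $f'$ along the hyperbolic geodesic from $\f(z)$ to $\p(z)$ and applying Schwarz--Pick, or through a mean-value estimate on pseudohyperbolic discs. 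Raising both sides to the $q$-th power and integrating against $\nu$ reduces the matter to controlling $M_r f$ in $L^q(\mu_\f)$ and $L^q(\mu_\p)$ by $\|f\|_{A^p_\om}$, where the push-forward measures $\mu_\f=(|\d|^q\nu)\circ\f^{-1}$ and $\mu_\p=(|\d|^q\nu)\circ\p^{-1}$ are $q$-Carleson for $A^p_\om$ by hypothesis. Since $p<q$ and $\om\in\DD$, the Carleson condition is the simple sup-type bound $\mu(S(a))\lesssim\om(S(a))^{q/p}$, and this persists under replacement of $|f|$ by $M_r f$ via subharmonicity of $|f|^p$ on $\Delta(w,2r)$ together with $\om(\Delta(w,2r))\asymp\om(S(w))$, which holds throughout~$\DD$.

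\textbf{Necessity.} Suppose $C_\f-C_\p:A^p_\om\to L^q_\nu$ is bounded. We aim to show that $\mu_\f$ and $\mu_\p$ satisfy the Carleson condition. Test on normalised reproducing-kernel-type functions
$$
f_a(z)=\om(S(a))^{-1/p}\left(\frac{1-|a|^2}{1-\overline{a}z}\right)^M,\quad a\in\D,
$$
with $M=M(\om,p)$ sufficiently large, for which $\|f_a\|_{A^p_\om}\lesssim 1$ uniformly in~$a$ by known estimates for $\om\in\DD$. Partition $\D$ into $G_0=\{z\in\D:|\d(z)|\ge 1/2\}$ and $G_1=\D\setminus G_0$. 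On $G_0$, the pseudohyperbolic separation of $\f(z)$ and $\p(z)$ allows one to choose $a$ close to $\f(z)$ so that $|f_a(\f(z))|$ dominates $|f_a(\p(z))|$; applying $C_\f-C_\p$ to $f_a$ and integrating over $a$ in a $\r$-lattice $\{a_k\}$ recovers the Carleson condition for $\mu_\f\big|_{G_0}$, with a symmetric argument for $\mu_\p$. On $G_1$, where $\p(z)$ is pseudohyperbolically close to $\f(z)$, one instead uses the first-order approximation $f(\p(z))-f(\f(z))\approx f'(\f(z))(\p(z)-\f(z))$, which transforms $C_\f-C_\p$ into an approximate multiplier by $\d$; randomising $\sum_k \e_k b_k f_{a_k}$ over the $\r$-lattice and applying Khinchine's inequality then converts single norm bounds into the requisite $\ell^p$-type Carleson bound.

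\textbf{Main obstacle.} The delicate step is the necessity argument on $G_1$, where one must recover $|\d(z)|$ from the smallness of $(C_\f-C_\p)f(z)$ despite $\f(z)$ and $\p(z)$ sitting in a common pseudohyperbolic disc. The analogous construction in~\cite{LR2020} for the class $\DDD$ employs the lower doubling condition in a substantive way; my plan is to replace each such appeal by the Carleson-square comparison $\om(\Delta(a,r))\asymp\om(S(a))$, which holds for all of $\DD$, together with the finite-overlap geometry of $\r$-lattices. A significant simplification relative to the $p>q$ case of Theorem~\ref{Theorem:main-1} is that for $p<q$ the Carleson characterisation in $\DD$ takes the clean sup-type form $\mu(S(a))\lesssim \om(S(a))^{q/p}$ rather than the Luecking-type $L^{p/(p-q)}_{\widetilde{\om}}$ condition of Theorem~\ref{Theorem:cm}; this should allow the entire argument to be carried out inside $\DD$ without any recourse to lower doubling.
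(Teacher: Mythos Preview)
Your sufficiency argument is close to the paper's (Proposition~\ref{proposition pq11}), which also reduces matters to the Carleson condition for the push-forward measures $\varphi_*(|\delta|^q\nu)$ and $\psi_*(|\delta|^q\nu)$. One warning: the assertion ``$\om(\Delta(w,2r))\asymp\om(S(w))$ holds throughout~$\DD$'' is false. For $\om\in\DD\setminus\Dd$ (e.g.\ $\widehat{\om}(r)=1/\log\frac{e}{1-r}$) the ratio $\om(\Delta(w,2r))/\om(S(w))$ tends to zero as $|w|\to1^-$. Fortunately you do not actually need this comparability: after subharmonicity and Fubini you land on $\int_\D|f(\z)|^q\frac{\mu_\f(\Delta(\z,R))}{(1-|\z|)^2}\,dA(\z)$, and since $\mu_\f(\Delta(\z,R))\lesssim\om(S(\z))^{q/p}$ it remains to check that $d\mu(\z)=\om(S(\z))^{q/p}(1-|\z|)^{-2}\,dA(\z)$ satisfies $\mu(S(a))\lesssim\om(S(a))^{q/p}$, which is a direct computation using only $\om\in\DD$. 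This is precisely the route taken in the paper.

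Your necessity argument, by contrast, is substantially more complicated than necessary, and imports machinery (Khinchine, $\rho$-lattice sums, the $G_0/G_1$ split) that the paper reserves for the harder $p>q$ case. For $p<q$ the paper (Proposition~\ref{proposition pq0}) simply tests on the single functions
\[
f_a(z)=\om(S(a))^{-1/p}\left(\frac{1-|a|}{1-\overline{a}z}\right)^\gamma
\]
and uses Saukko's pointwise inequality
\[
\left|1-\left(\frac{1-\overline{a}z}{1-\overline{a}w}\right)^\gamma\right|\ge C|a|\,\rho(z,w),\quad z\in\Delta(a,r),
\]
which holds for \emph{all} $w\in\D$, not just $w$ pseudohyperbolically close to $z$. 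This immediately gives
\[
|(C_\f-C_\p)f_a(z)|^q\gtrsim\frac{|a|^q|\delta(z)|^q}{\om(S(a))^{q/p}}\quad\text{on }\varphi^{-1}(\Delta(a,r)),
\]
regardless of the size of $|\delta(z)|$, so integrating against $\nu$ yields $\varphi_*(|\delta|^q\nu)(\Delta(a,r))\lesssim\om(S(a))^{q/p}$ directly. No first-order Taylor approximation, no randomisation, and no separation into the regimes $|\delta|\gtrless 1/2$ is needed; the algebraic inequality does all the work. Your plan would likely succeed, but you are solving a harder problem than the one in front of you.
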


If $q=p$ then the boundedness (resp. compactness) of $\d C_\f$ and $\d C_\p$ implies the same property for $C_\f-C_\p$ by Proposition~\ref{proposition pq11} below. Further, Proposition~\ref{proposition pq1} below shows that $C_\f-C_\p$ is compact if $\d C_\f$ and $\d C_\p$ are bounded when $p>q$. But we do not know if the boundedness of $C_\f-C_\p$ implies that of $\d C_\f$ and $\d C_\p$ if $\om\in\DD\setminus\DDD$ if $p\ge q$. The methods used in this paper do not seem to give this implication and hence this case remains unsettled.

The rest of the paper is organized as follows. In the next section we consider Carleson embeddings and prove Theorem~\ref{Theorem:cm}. The sufficiency of the conditions on $\d C_\f$ and $\d C_\p$ are established in Section~\ref{Sec:sufficiency}, while Section~\ref{Sec:necessity} is devoted to their necessity. Finally, in Section~\ref{final} we indicate how our main findings on $C_\f-C_\p$ follow from these results.

To this end, couple of words about the notation used in this paper. The letter $C=C(\cdot)$ will denote an absolute constant whose value depends on the parameters indicated
in the parenthesis, and may change from one occurrence to another.
We will use the notation $a\lesssim b$ if there exists a constant
$C=C(\cdot)>0$ such that $a\le Cb$, and $a\gtrsim b$ is understood
in an analogous manner. In particular, if $a\lesssim b$ and
$a\gtrsim b$, then we write $a\asymp b$ and say that $a$ and $b$ are comparable.

\section{Carleson measures}

If $\om\in\DDD$, then there exist constants $0<\alpha=\alpha(\om)\le\b=\b(\om)<\infty$ and $C=C(\om)\ge1$ such that
	\begin{equation}\label{Eq:characterization-D}
	\frac1C\left(\frac{1-r}{1-t}\right)^\alpha
	\le\frac{\widehat{\om}(r)}{\widehat{\om}(t)}
	\le C\left(\frac{1-r}{1-t}\right)^\beta,\quad 0\le r\le t<1.
	\end{equation}
In fact, these inequalities characterize the class $\DDD$ because the right hand inequality is satisfied if and only if $\om\in\DD$ by \cite[Lemma~2.1]{PelSum14}, while the left hand inequality describes the class $\Dd$ in an analogous manner \cite[(2.27)]{PelRat2020}. The inequalities \eqref{Eq:characterization-D} will be frequently used throughout the paper.

\medskip
\noindent\emph{Proof of} Theorem~\ref{Theorem:cm}.
If $\mu$ is a $q$-Carleson measure for $A^p_{\omega}$, then $I:A^p_\om\to L^q_\mu$ is automatically compact by \cite[Theorem~3(iii)]{PRS}. Therefore it suffices to show that (i) and (iii) are equivalent and establish \eqref{operator norm}.

Assume first $\Theta^\om_\mu\in L_{\widetilde{\omega}}^{\frac{p}{p-q}}$ for some $r\in(0,1)$. The subharmonicity of $|f|^q$, Fubini's theorem, H\"{o}lder's inequality and \eqref{eq:equivalent norms} imply
	\begin{equation*}
	\begin{split}
	\|f\|^q_{L_{\mu}^q}
	&\lesssim\int_{\D}\left(\int_{\Delta(z,r)}\frac{|f(\zeta)|^q}{(1-|\zeta|^2)^2}dA(\zeta)\right)d\mu(z)\\
	&=\int_{\D}\left(\int_{\Delta(z,r)}|f(\zeta)|^q\frac{\widetilde{\omega}(\zeta)}{\widehat{\omega}(\z)(1-|\z|)}dA(\zeta)\right)d\mu(z)\\
	&\le\int_{\D}|f(\zeta)|^q\frac{\mu(\Delta(\zeta,r))}{\omega(S(\zeta))}\widetilde{\omega}(\zeta)dA(\zeta)
	\le\|f\|^{q}_{A^p_{\widetilde{\omega}}}
	\left\|\Theta^\om_\mu\right\|_{L^{\frac{p}{p-q}}_{\widetilde{\omega}}}\\
	&\asymp\|f\|^{q}_{A^p_{\omega}}
	\left\|\Theta^\om_\mu\right\|_{L^{\frac{p}{p-q}}_{\widetilde{\omega}}},\quad f\in A^p_\om.
	\end{split}
	\end{equation*}
Therefore $\mu$ is a $q$-Carleson measure for $A^p_\om$ and $\|I\|_{A^p_\om\to L^q_\mu}^q\lesssim\left\|\Theta^\om_\mu\right\|_{L^{\frac{p}{p-q}}_{\widetilde{\omega}}}$.

Conversely, assume that $\mu$ is a $q$-Carleson measure for $A^p_\om$. Then \eqref{eq:equivalent norms} shows that $\mu$ is also a $q$-Carleson measure for $A^p_{\widetilde\om}$ and the corresponding operator norms are comparable. Further, since $\om\in\DDD$ by the hypothesis, an application of \eqref{Eq:characterization-D} shows that $\widetilde{\om}\in\DDD$. Therefore \cite[Theorem~1(a)]{PelRatEmb} implies
	$$
	\|B^{\widetilde{\om}}_\mu\|_{L^{\frac{p}{p-q}}_{\widetilde{\om}}}^{\frac{p}{p-q}}
	=\int_\D\left(\int_{\Gamma(z)}\frac{d\mu(\z)}{\widetilde{\om}(T(\zeta))}\right)^\frac{p}{p-q}\widetilde{\om}(z)\,dA(z)<\infty,
	$$
where
	$$
	\Gamma(z)=\left\{\zeta\in\D:|\arg\z-\arg z|<\frac12\left(1-\frac{|\zeta|}{|z|}\right)\right\},\quad z\in\D\setminus\{0\},
	$$
is a non-tangential approach region with vertex at $z$. Further, by \cite[Theorem~3(iii)]{PRS} we have $\|I\|_{A^p_\om\to L^q_\mu}^q\gtrsim\left\|B^{\widetilde{\om}}_\mu\right\|_{L^{\frac{p}{p-q}}_{\widetilde{\omega}}}$.

Let now $r\in(0,1)$ be given. For $K>1$ and $z\in\D\setminus D(0,1-\frac1K)$ write $z_K=(1-K(1-|z|))e^{i\arg z}$. Pick up $K=K(r)>1$ and $R=R(r)\in(1-\frac1K,1)$ sufficiently large such that $\Delta(z_K,r)\subset\Gamma(z)$ for all $z\in\D\setminus D(0,R)$. Straightforward applications of the left hand inequality in \eqref{Eq:characterization-D} show that $\widetilde{\om}(T(\z))\lesssim\om(S(\z))$, as $|\z|\to1^-$, and $\widetilde{\om}(z)\lesssim\widetilde{\om}(z_K)$ for all $z\in\D\setminus D(0,R)$. In an analogous way we deduce $\om(S(\z))\lesssim\om(S(z_K))$ for all $\z\in\Delta(z_K,r)$ and $z\in\D\setminus D(0,R)$ by using the right hand inequality. Therefore
	\begin{equation*}
	\begin{split}
	\infty
	&>\|I\|_{A^p_\om\to L^q_\mu}^q\gtrsim\int_{\D\setminus D(0,R)}\left(\int_{\Delta(z_K,r)}\frac{d\mu(\z)}{\om(T(\zeta))}\right)^\frac{p}{p-q}\widetilde{\om}(z)\,dA(z)\\
	&\gtrsim\int_{\D\setminus D(0,R)}\left(\frac{\mu(\Delta(z_K,r))}{\om(S(z_K))}\right)^\frac{p}{p-q}\widetilde{\om}(z_K)\,dA(z)\\
	&\asymp\int_{\D\setminus D(0,1-K(1-R))}\left|\Theta^\om_\mu(z)\right|^\frac{p}{p-q}\widetilde{\om}(z)\,dA(z).
	\end{split}
	\end{equation*}
It follows that $\Theta^\om_\mu\in L_{\widetilde{\omega}}^{\frac{p}{p-q}}$ and $\|I\|_{A^p_\om\to L^q_\mu}^q\gtrsim\left\|B^{\widetilde{\om}}_\mu\right\|_{L^{\frac{p}{p-q}}_{\widetilde{\omega}}}\gtrsim\|\Theta^\om_\mu\|_{L_{\widetilde{\omega}}^{\frac{p}{p-q}}}$.
\hfill$\Box$

\section{Sufficient conditions}\label{Sec:sufficiency}

In this section we establish sufficient conditions for $C_\f-C_\p:A^p_\om\rightarrow L^q_\nu$ to be bounded or compact. All these results are valid under the hypothesis $\om\in\DD$ despite the main results stated in the introduction concern only the class $\DDD$. We begin with the case $p\le q$.

\begin{proposition}\label{proposition pq11}
Let $0<p\le q<\infty$ and $\om\in\DD$, and let $\nu$ be a positive Borel measure on $\D$. Let $\f$ and $\p$ be analytic self-maps of $\D$. If $\d C_\f$ and $\d C_\p$ are bounded (resp. compact) from $A_\om^p$ to $L^q_\nu$, then $C_\f-C_\p:A^p_\om\rightarrow L^q_\nu$ is bounded (resp. compact).
\end{proposition}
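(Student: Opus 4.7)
The plan is to decompose the disc according to the size of $|\d|$ and estimate $|f\circ\varphi-f\circ\psi|^q$ pointwise by quantities controlled by the hypothesized bounds on $\d C_\varphi$ and $\d C_\psi$. Set $E_1=\{z\in\D:|\d(z)|\ge 1/2\}$ and $E_2=\D\setminus E_1$.

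On $E_1$ the factor $|\d|$ is bounded below, so the triangle inequality combined with $1\le 2|\d(z)|$ and the elementary bound $(a+b)^q\lesssim a^q+b^q$ (valid for all $q>0$) yields $|f(\varphi(z))-f(\psi(z))|^q\lesssim |\d(z)|^q(|f(\varphi(z))|^q+|f(\psi(z))|^q)$, so that $\int_{E_1}|(C_\varphi-C_\psi)f|^q\,d\nu\lesssim\|\d C_\varphi f\|_{L^q_\nu}^q+\|\d C_\psi f\|_{L^q_\nu}^q$. The bounded (resp.\ compact) case on $E_1$ thus follows directly from the corresponding hypothesis.

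On $E_2$ the points $\varphi(z)$ and $\psi(z)$ are pseudohyperbolically close, so a local Lipschitz-type estimate applies. Parameterizing the Euclidean segment from $\psi(z)$ to $\varphi(z)$ (which sits inside the Euclidean disc $\Delta(\varphi(z),1/2)$), using $|\varphi(z)-\psi(z)|=|\d(z)||1-\overline{\psi(z)}\varphi(z)|\asymp|\d(z)|(1-|\varphi(z)|^2)$ on $E_2$, and applying the Cauchy-type bound $(1-|w|^2)|f'(w)|\lesssim\sup_{\zeta\in\Delta(w,r)}|f(\zeta)|$ along the segment, gives
$$|f(\varphi(z))-f(\psi(z))|\lesssim|\d(z)|\sup_{\zeta\in\Delta(\varphi(z),s)}|f(\zeta)|$$
for some $s\in(1/2,1)$. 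Sub-$p$-harmonicity of $|f|^p$, doubling of $\omega$ over pseudohyperbolic discs, and Jensen's inequality (valid since $q/p\ge 1$) then yield $\sup_{\zeta\in\Delta(\varphi(z),s)}|f(\zeta)|^q\lesssim \omega(\Delta(\varphi(z),s'))^{-1}\int_{\Delta(\varphi(z),s')}|f|^q\omega\,dA$ for a suitable $s'\in(s,1)$.

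Substituting this into $\int_{E_2}|(C_\varphi-C_\psi)f|^q\,d\nu$ and applying Fubini rewrites the integral, up to doubling constants, as a weighted integral of $|f|^q\omega$ whose density involves $\mu_\varphi(\Delta(\zeta,s'))/\omega(\Delta(\zeta,s''))$, where $\mu_\varphi=(|\d|^q\,d\nu)\circ\varphi^{-1}$. Since $\d C_\varphi:A^p_\omega\to L^q_\nu$ is bounded by hypothesis, $\mu_\varphi$ is a $q$-Carleson measure for $A^p_\omega$; the classical characterization of such measures in the range $p\le q$ with $\omega\in\mathcal{D}$, combined with doubling, shows that this density is itself a $q$-Carleson density for $A^p_\omega$, so $\int_{E_2}|(C_\varphi-C_\psi)f|^q\,d\nu\lesssim\|f\|_{A^p_\omega}^q$. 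The compact case follows by the same decomposition applied to a sequence bounded in $A^p_\omega$ converging to zero uniformly on compact subsets of $\D$: the $E_1$-contribution tends to zero by compactness of $\d C_\varphi$ and $\d C_\psi$, while on $E_2$ the maximal function $\sup_{\Delta(\cdot,s)}|f_n|$ tends to zero locally uniformly and dominated convergence against the Carleson bound closes the argument. I expect the main technical hurdle to be the Fubini/doubling step on $E_2$: matching the radii $s<s'<s''<1$ and exploiting the doubling of $\omega$ over pseudohyperbolic discs carefully, so that the bound lands exactly on $\|f\|_{A^p_\omega}^q$ rather than on some larger norm.
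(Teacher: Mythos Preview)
Your overall strategy coincides with the paper's: split $\D$ according to whether $|\delta|$ is large or small, handle the large-$|\delta|$ set by the trivial bound $|(C_\varphi-C_\psi)f|\le\frac{1}{r}(|\delta C_\varphi f|+|\delta C_\psi f|)$, and on the small-$|\delta|$ set use a local oscillation estimate, Fubini, and the Carleson-measure characterization of the embedding $A^p_\omega\hookrightarrow L^q_\mu$.

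The issue lies in your execution on $E_2$. You invoke ``doubling of $\omega$ over pseudohyperbolic discs'' and a weighted sub-mean-value inequality
\[
\sup_{\zeta\in\Delta(\varphi(z),s)}|f(\zeta)|^q\lesssim \omega(\Delta(\varphi(z),s'))^{-1}\int_{\Delta(\varphi(z),s')}|f|^q\omega\,dA.
\]
Neither is available under the hypothesis $\omega\in\widehat{\mathcal D}$ alone. A radial weight in $\widehat{\mathcal D}$ may vanish on entire pseudohyperbolic discs (the paper itself builds such a weight, $\omega=\sum_n\chi_{A_{2n+1}}$, to show that $\omega(\Delta(a,r))$ need not be comparable to $\omega(S(a))$); then $\omega(\Delta(\varphi(z),s'))$ can be zero or arbitrarily small, the weighted mean-value estimate collapses, and the Fubini step yields a density with $\omega(\Delta(\zeta,s''))$ in the denominator that you cannot control. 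The paper circumvents this by using the \emph{unweighted} local estimate
\[
|f(\varphi(z))-f(\psi(z))|^q\lesssim\frac{|\delta(z)|^q}{(1-|\varphi(z)|)^2}\int_{\Delta(\varphi(z),R)}|f(\zeta)|^q\,dA(\zeta),
\]
so that after Fubini the resulting measure has density $\varphi_*(|\delta|^q\nu)(\Delta(\zeta,R))/(1-|\zeta|)^2$; one then checks directly (via a second Fubini over Carleson squares) that this defines a $q$-Carleson measure for $A^p_\omega$, using only $\varphi_*(|\delta|^q\nu)(S(a))\lesssim\omega(S(a))^{q/p}$, which is exactly what the boundedness of $\delta C_\varphi$ gives through the known characterization for $\omega\in\widehat{\mathcal D}$.

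Your compactness sketch is also too compressed. ``Dominated convergence against the Carleson bound'' does not close the argument: you must split the $E_2$-integral into a compact region, where $f_n\to0$ uniformly, and a boundary region, and the latter must be made small uniformly in $n$. For $q>p$ this uses the vanishing-Carleson condition $\varphi_*(|\delta|^q\nu)(S(\zeta))/\omega(S(\zeta))^{q/p}\to0$ as $|\zeta|\to1^-$ supplied by compactness of $\delta C_\varphi$; for $q=p$ the paper needs an additional argument bounding $\varphi_*(|\delta|^p\nu)(S(b)\setminus D(0,\eta))/\omega(S(b))$ uniformly in $b$ as $\eta\to1^-$. These steps are not automatic and should be made explicit.
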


\begin{proof}
We begin with the statement on the boundedness. Let first $q>p$. Let $f\in A^p_\omega$ with $\|f\|_{A^p_\om}\le1$. Fix $0<r<R<1$, and denote $E=\{z\in \D: |\delta(z)|<r\}$ and $E'=\D\setminus E$. Write
      \begin{equation*}
      (C_\varphi-C_\psi)(f)=(C_\varphi-C_\psi)(f)\chi_{E'}+(C_\varphi-C_\psi)(f)\chi_{E},
      \end{equation*}
and observe that it is enough to prove that the quantities
      \begin{equation}\label{Eq:3-conditions}
      \|(C_\varphi-C_\psi)(f)\chi_{E'}\|_{L^q_\nu}\quad\textrm{and}\quad \|(C_\varphi-C_\psi)(f)\chi_{E}\|_{L^q_\nu}
      \end{equation}
are bounded.

We begin with considering the first quantity in \eqref{Eq:3-conditions}. By the definition of the set $E$ we have the estimate
    \begin{equation}\label{Eq:easy}
    |(C_\varphi-C_\psi)(f)\chi_{E'}|\le\frac{1}{r}(|\d C_\varphi(f)|+|\d C_\psi(f)|),
	  \end{equation}
on $\D$. Since the operators $\d C_\f$ and $\d C_\p$ both are bounded from $A_\om^p$ to $L^q_\nu$ by the hypothesis, the first term in \eqref{Eq:3-conditions} is bounded by \eqref{Eq:easy}.

We next show that also the second term in \eqref{Eq:3-conditions} is bounded. Let $\mu$ be a finite nonnegative Borel measure on $\D$ and $h$ a measureable function on $\D$. For an analytic self-map $\varphi$ of $\D$, the weighted pushforward measure is defined by
       \begin{equation}\label{Eq:pusforward}
       \varphi_*(h,\mu)(M)=\int_{\varphi^{-1}(M)}h d\mu
       \end{equation}
for each measurable set $M\subset\D$. If $\mu$ is the Lebesgue measure, we omit the measure in the notation and write $\varphi_*(h)(M)$ for the left hand side of \eqref{Eq:pusforward}. By the measure theoretic change of variable~\cite[Section~39]{PM}, we have $\|\delta C_\varphi(f)\|_{L^q_\nu}=\|f\|_{L^q_{\varphi_*(|\delta|^q\nu)}}$ for each $f\in A^p_\om$. Therefore the identity operator from $A^p_\om$ to $L^q_{\varphi_*(|\delta|^q\nu)}$ is bounded by the hypothesis. Hence $\varphi_*(|\delta |^q\nu)(\Delta(\z,R))\lesssim\omega(S(\z))^{\frac{q}{p}}$ for all $\z\in\D\setminus\{0\}$ by \cite[Theorem~1(c)]{PelRatEmb}. Further, by \cite[Lemma~3]{LR2020}, with $\om\equiv1$ and $q=p$, there exists a constant
  $C=C(p,r,R)>0$ such that
     \begin{equation*}
     |f(z)-f(a)|^q\le C\frac{\rho(z,a)^p}{(1-|a|)^2}\int_{\Delta(a,R)}|f(\zeta)|^p\,dA(\zeta), \quad a\in \D,\quad z\in\Delta(a,r),
     \end{equation*}
for all $f\in A^p_\om$ with $\|f\|_{A^p_\om}\le1$. This and Fubini's theorem yield
		\begin{equation}\label{Eq:estimate-sufficiency}
		\begin{split}
    \|(C_\varphi-C_\psi)(f)\chi_E\|^q_{L^q_\nu}
		&=\int_E|f(\varphi(z))-f(\psi(z))|^q\nu(z)\,dA(z)\\
    &\lesssim\int_E\frac{|\delta(z)|^q}{(1-|\varphi(z)|)^2} \int_{(\varphi(z),R)}|f(\z)|^q\,dA(\z)\nu(z)\,dA(z)\\
    &\le\int_{\D}|f(\z)|^q\left(\int_{\varphi^{-1}(\Delta(\z,R))\cap E}\frac{|\delta(z)|^q}{(1-|\varphi(z)|)^2}\nu(z)\,dA(z)\right)dA(\z)\\
    &\lesssim\int_\D|f(\z)|^q\left(\int_{\varphi^{-1}(\Delta(\z,R))}\frac{|\delta(z)|^q}{(1-|\z|)^2}\nu(z)\,dA(z)\right)dA(\z)\\
    &=\int_\D|f(\z)|^q\frac{\varphi_*(|\delta |^q\nu)(\Delta(\z,R))}{(1-|\z|)^2}\,dA(\z)\\
		&\lesssim\int_\D|f(\z)|^q\frac{\omega(S(\z))^{\frac{q}{p}}}{(1-|\z|)^2}\,dA(\z)
		=\int_\D|f(\z)|^q\,d\mu(\z).
    \end{split}
    \end{equation}
Standard arguments show that $\mu(S(a))\lesssim \omega(S(a))^{\frac{q}{p}}$ for all $a\in\D\setminus\{0\}$. Hence \cite[Theorem~1(c)]{PelRatEmb} yields $\|(C_\varphi-C_\psi)(f)\chi_E\|^q_{L^q_\nu}\lesssim \|f\|^{q}_{L^q_\mu}\lesssim\|f\|^q_{A^p_\omega}$. Therefore also the second term in \eqref{Eq:3-conditions} is bounded. This finishes the proof of the case $q>p$.

Let now $q=p$. By following the proof above, it suffices to show that
	\begin{equation}\label{eq:enough}
	\int_{S}\frac{\varphi_*(|\delta |^q\nu)(\Delta(\z,R))}{(1-|\z|)^2}\,dA(\z)\lesssim\om(S)
	\end{equation}
for every Carleson square $S\subset\D$. By the hypothesis, the identity operator from $A^p_\om$ to $L^p_{\varphi_*(|\delta|^q\nu)}$ is bounded, and hence $\varphi_*(|\delta |^q\nu)(S)\lesssim\omega(S)$ for all $S$ by \cite[Theorem~1(b)]{PelRatEmb}. But for each positive Borel measure $\mu$ on $\D$, Fubini's theorem yields
	\begin{equation}\label{pili}
	\begin{split}
	\int_{S(a)}\frac{\mu(\Delta(\z,R))}{(1-|\z|)^2}\,dA(\zeta)
	&=\int_{\{z\in\D:S(a)\cap\Delta(z,R)\ne\emptyset\}}
	\left(\int_{S(a)\cap\Delta(z,R)}\frac{dA(\z)}{(1-|\z|)^2}\right)\,d\mu(z)\\
	&\le\int_{S(b)}\left(\int_{\Delta(z,R)}\frac{dA(\z)}{(1-|\z|)^2}\right)\,d\mu(z)
	\asymp\mu(S(b)),\quad|a|>R,
	\end{split}
	\end{equation}
where $b=b(a,R)\in\D$ satisfies $\arg b=\arg a$ and $1-|b|\asymp1-|a|$ for all $a\in\D\setminus\overline{D(0,R)}$. By applying this to $\mu=\varphi_*(|\delta|^q\nu)$ and using the hypothesis $\om\in\DD$ we deduce
	$$
	\int_{S(a)}\frac{\varphi_*(|\delta |^q\nu)(\Delta(\z,R))}{(1-|\z|)^2}\,dA(\z)
	\lesssim\varphi_*(|\delta|^q\nu)(S(b))\lesssim\om(S(b))\lesssim\om(S(a)),\quad |a|>R.
	$$
This estimate implies \eqref{eq:enough}, and thus the case $q=p$ is proved.

To obtain the compactness statement, it suffices to show that the quantities
	\begin{equation}\label{Eq:4-conditions-comp}
  \|(C_\varphi-C_\psi)(f_n)\chi_{E'}\|_{L^q_\nu},\quad\textrm{and}\quad
	\|(C_\varphi-C_\psi)(f_n)\chi_{E}\|_{L^q_\nu}
  \end{equation}
tend to zero as $n\rightarrow\infty$ for each sequence $\{f_n\}_{n\in\N}$ in $A^p_\om$ which tends to zero uniformly on compact subsets of $\D$ as $n\rightarrow\infty$ and satisfies $\|f_n\|_{A^p_\om}\le1$ for all $n\in\N$. Since $\d C_\f$ and $\d C_\p$ are compact from $A_\om^p$ to $L^q_\nu$ by the hypothesis, an application of \eqref{Eq:easy} to $f=f_n$ shows that the first quantity in \eqref{Eq:4-conditions-comp} tends to zero as $n\to\infty$. As for the second quantity, observe that \eqref{Eq:estimate-sufficiency} implies
	\begin{equation}\label{Eq:estimate-sufficiency-compactness}
		\begin{split}
    \|(C_\varphi-C_\psi)(f_n)\chi_E\|^q_{L^q_\nu}
		&\lesssim\int_\D|f_n(\z)|^q\frac{\varphi_*(|\delta |^q\nu)(\Delta(\z,R))}{(1-|\z|)^2}\,dA(\z),\quad n\in\N.
		\end{split}
    \end{equation}
Let first $q>p$. Since the identity operator from $A^p_\om$ to $L^q_{\varphi_*(|\delta|^q\nu)}$ is compact by the hypothesis, we have $\varphi_*(|\delta |^q\nu)(S(\z))/\omega(S(\z))^{\frac{q}{p}}\to0$ as $|\z|\to1^-$ by \cite[Theorem~3(ii)]{PRS}. Now, for each $\z\in\D\setminus\{0\}$ pick up $\z'=\z'(\z,R)\in\D$ such that $\arg\z'=\arg\z$, $\Delta(\z',R)\subset S(\z)$ and $1-|\z'|\asymp1-|\z|$ for all $\z\in\D\setminus\{0\}$. Then
	$$
	\frac{\varphi_*(|\delta |^q\nu)(S(\z))}{\omega(S(\z))^{\frac{q}{p}}}
	\ge\frac{\varphi_*(|\delta |^q\nu)(\Delta(\z',R))}{\omega(S(\z))^{\frac{q}{p}}}
	\gtrsim\frac{\varphi_*(|\delta |^q\nu)(\Delta(\z',R))}{\omega(S(\z'))^{\frac{q}{p}}},\quad \z\in\D\setminus\{0\},
	$$
and hence $\sup_{\z\in\D\setminus\{0\}}\varphi_*(|\delta |^q\nu)(\Delta(\z,R))/\omega(S(\z))^{\frac{q}{p}}<\infty$ and, for a given $\e>0$, there exists $\eta=\eta(\e)\in(0,1)$ such that $\varphi_*(|\delta |^q\nu)(\Delta(\z,R))/\omega(S(\z))^{\frac{q}{p}}<\e$ for all $\z\in\D\setminus D(0,\eta)$. Further, by the uniform convergence, there exists $N=N(\e,\eta,q)\in\N$ such that $|f_n|^q\le\e$ on $D(0,\eta)$ for all $n\ge N$. These observations together with the proof of the boundedness case and \eqref{Eq:estimate-sufficiency-compactness} yield
	\begin{equation*}
	\begin{split}
  \|(C_\varphi-C_\psi)(f_n)\chi_E\|^q_{L^q_\nu}
	&\lesssim\sup_{\zeta\in D(0,\eta)\setminus\{0\}}\frac{\varphi_*(|\delta|^q\nu)(\Delta(\zeta,R))}{\omega(S(\z))^{\frac{q}{p}}}
	\int_{D(0,\eta)}|f_n(\zeta)|^q\frac{\omega(S(\z))^{\frac{q}{p}}}{(1-|\z|)^2}\,dA(\zeta)\\
  &\quad+\sup_{\zeta\in\D\setminus D(0,\eta)}\frac{\varphi_*(|\delta|^q\nu)(\Delta(\zeta,R))}{\omega(S(\z))^{\frac{q}{p}}}
	\int_{\D\setminus D(0,\eta)}|f_n(\zeta)|^q\frac{\omega(S(\z))^{\frac{q}{p}}}{(1-|\z|)^2}\,dA(\zeta)\\
  &\lesssim\e\int_{\D}\frac{\omega(S(\z))^{\frac{q}{p}}}{(1-|\z|)^2}\,dA(\zeta)
	+\e\|f_n\|_{A^p_\om}^q\lesssim\e,\quad n\ge N.
  \end{split}
	\end{equation*}
Thus also the second quantity in \eqref{Eq:4-conditions-comp} tends to zero as $n\to\infty$ in the case $q>p$.

Finally, let $q=p$. The compactness of the identity operator from $A^p_\om$ to $L^p_{\varphi_*(|\delta|^p\nu)}$ implies $\varphi_*(|\delta |^p\nu)(S(\z))/\omega(S(\z))\to0$ as $|\z|\to1^-$ by \cite[Theorem~3(ii)]{PRS}. By following the proof above the only different step consists of making the quantity
	$$
	J(\eta)=\int_{\D\setminus D(0,\eta)}|f_n(\zeta)|^p\frac{\varphi_*(|\delta|^p\nu)(\Delta(\z,R))}{(1-|\z|)^2}\,dA(\z)
	$$
small, uniformly in $n$, by choosing $\eta\in(0,1)$ sufficiently large. But an application of \cite[Theorem~1(b)]{PelRatEmb} together with \eqref{pili} yields
	$$
	J(\eta)\lesssim\|f_n\|_{A^p_\om}^p\sup_{b\in\D\setminus\{0\}}\frac{\varphi_*(|\delta|^p\nu)(S(b)\setminus D(0,\eta))}{\om(S(b))}.
	$$
Standard arguments can now be used to make the right hand side smaller than a pregiven $\e>0$ for $\eta$ sufficiently large by using $\varphi_*(|\delta |^p\nu)(S(\z))/\omega(S(\z))\to0$ as $|\z|\to1^-$, see, for example, \cite[pp.~26--27]{PR} for details. This completes the proof of the proposition.
\end{proof}

The next result is a counter part of Proposition~\ref{proposition pq11} in the case $p>q$.

\begin{proposition}\label{proposition pq1}
Let $0<q<p<\infty$ and $\om\in\DD$, and let $\nu$ be a positive Borel measure on $\D$. Let $\f$ and $\p$ be analytic self-maps of $\D$. If $\d C_\f$ and $\d C_\p$ are bounded from $A_\om^p$ to $L^q_\nu$, then $C_\f-C_\p:A^p_\om\rightarrow L^q_\nu$ is compact.
\end{proposition}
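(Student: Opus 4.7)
The strategy is to imitate the proof of Proposition~\ref{proposition pq11}, but this time exploiting the powerful fact that, in the range $q<p$, a $q$-Carleson embedding into $L^q$ is \emph{automatically} compact by (i)$\Leftrightarrow$(ii) of Theorem~\ref{Theorem:cm}. Fix a sequence $\{f_n\}\subset A^p_\om$ with $\|f_n\|_{A^p_\om}\le 1$ converging to zero uniformly on compact subsets of $\D$, pick $0<r<R<1$, and split $\D=E\cup E'$ with $E=\{z\in\D:|\delta(z)|<r\}$. As in the previous proposition, it suffices to prove that both $\|(C_\f-C_\p)(f_n)\chi_{E'}\|_{L^q_\nu}$ and $\|(C_\f-C_\p)(f_n)\chi_{E}\|_{L^q_\nu}$ tend to $0$ as $n\to\infty$.

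The first quantity is easy to handle: the change of variable gives $\|\delta C_\f f\|_{L^q_\nu}^q=\int_\D|f|^q\,d\tau_\f$ with $\tau_\f:=\f_*(|\delta|^q\nu)$, so boundedness of $\delta C_\f:A^p_\om\to L^q_\nu$ is equivalent to $\tau_\f$ being a $q$-Carleson measure for $A^p_\om$. Since $q<p$, Theorem~\ref{Theorem:cm} upgrades this to compactness of $\delta C_\f$, and similarly for $\delta C_\p$. The pointwise bound \eqref{Eq:easy} applied to $f=f_n$ then immediately yields $\|(C_\f-C_\p)(f_n)\chi_{E'}\|_{L^q_\nu}\to 0$.

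For the second quantity, the chain of inequalities in \eqref{Eq:estimate-sufficiency} is valid without any assumption on the relative size of $p$ and $q$ up to, but not including, the step where the pointwise bound $\tau_\f(\Delta(\z,R))\lesssim\om(S(\z))^{q/p}$ (which is available only when $q>p$) is invoked. Stopping one line earlier, one obtains
\begin{equation*}
\|(C_\f-C_\p)(f_n)\chi_E\|^q_{L^q_\nu}\lesssim\int_\D|f_n(\z)|^q\,d\mu_0(\z),\qquad d\mu_0(\z):=\frac{\tau_\f(\Delta(\z,R))}{(1-|\z|)^2}\,dA(\z).
\end{equation*}
The core task is then to prove that $\mu_0$ is itself a $q$-Carleson measure for $A^p_\om$. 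Granted this, another application of Theorem~\ref{Theorem:cm} makes the embedding $A^p_\om\hookrightarrow L^q_{\mu_0}$ compact, and the desired convergence $\int_\D|f_n|^q\,d\mu_0\to 0$ follows from the boundedness of $\{f_n\}$ together with its uniform convergence on compact subsets. To verify the Carleson property for $\mu_0$, I would work through Theorem~\ref{Theorem:cm}(iii): computing $\mu_0(\Delta(z,r))$ via Fubini in the spirit of \eqref{pili} and using the quasi-triangle inequality for the pseudohyperbolic metric (so that $w\in\Delta(z,r)$ and $\xi\in\Delta(w,R)$ force $\xi\in\Delta(z,R')$ for an appropriate $R'=R'(r,R)\in(0,1)$), one reaches
\begin{equation*}
\mu_0(\Delta(z,r))\lesssim\tau_\f(\Delta(z,R')),\qquad z\in\D,
\end{equation*}
after absorbing the inner area integral $\int_{\Delta(\xi,R)\cap\Delta(z,r)}(1-|w|)^{-2}\,dA(w)\lesssim 1$. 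The doubling property of $\om\in\DD$ ensures that $\om(S(\cdot))$ is comparable on pseudohyperbolic discs, yielding the pointwise bound $\Theta^\om_{\mu_0}(z)\lesssim\Theta^\om_{\tau_\f}(z)$ (the latter read with radius $R'$). Since $\tau_\f$ is a $q$-Carleson measure for $A^p_\om$, Theorem~\ref{Theorem:cm} places $\Theta^\om_{\tau_\f}$, and hence $\Theta^\om_{\mu_0}$, in $L^{p/(p-q)}_{\widetilde{\om}}$, which is precisely what is needed.

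The main technical hurdle I expect is this pointwise domination $\mu_0(\Delta(z,r))\lesssim\tau_\f(\Delta(z,R'))$ and the accompanying comparability of $\om(S(\cdot))$ on nearby points, but both are standard consequences of the pseudohyperbolic geometry and the doubling condition, entirely in the spirit of \eqref{pili} and the proof of Theorem~\ref{Theorem:cm}, so the computation should be routine modulo careful bookkeeping of radii.
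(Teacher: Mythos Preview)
Your overall strategy—reducing to $\|(C_\f-C_\p)(f_n)\chi_E\|^q_{L^q_\nu}\lesssim\int_\D|f_n|^q\,d\mu_0$ and then arguing that $\mu_0$ is itself a $q$-Carleson measure for $A^p_\om$, so that the embedding is automatically compact—is exactly the right idea, and it is in spirit what the paper does as well. The pointwise domination $\mu_0(\Delta(z,r))\lesssim\tau_\f(\Delta(z,R'))$ you obtain via Fubini is correct and elegant. However, there is a genuine gap in your execution: you rely on Theorem~\ref{Theorem:cm}(iii), the characterization via $\Theta^\om_\mu\in L^{p/(p-q)}_{\widetilde\om}$, both to pass from $\tau_\f$ being $q$-Carleson to $\Theta^\om_{\tau_\f}\in L^{p/(p-q)}_{\widetilde\om}$ and to deduce the Carleson property for $\mu_0$. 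But Theorem~\ref{Theorem:cm} is stated and proved only for $\om\in\DDD$, whereas Proposition~\ref{proposition pq1} assumes merely $\om\in\DD$. The proof of Theorem~\ref{Theorem:cm} uses \eqref{eq:equivalent norms} and the left-hand inequality in \eqref{Eq:characterization-D}, both of which genuinely require $\om\in\Dd$; for $\om\in\DD\setminus\DDD$ the weight $\widetilde\om$ need not interact well with $\om$, and the $\Theta^\om_\mu$ criterion is simply unavailable.

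The paper circumvents this by never leaving the $\DD$-world: it uses the tent-space characterization of $q$-Carleson measures from \cite[Theorem~3(iii)]{PRS}, valid for all $\om\in\DD$. After splitting off a compact disc $\overline{D(0,R_0)}$ (handled by uniform convergence), the remaining integral is rewritten via Fubini as $\int_\D\big(\int_{\Gamma(z)\setminus\overline{D(0,R_0)}}\tau_\f(\Delta(\z,R))/(\om(T(\z))(1-|\z|)^2)\,dA(\z)\big)\om(z)\,dA(z)$, the factor $|f_n(\z)|^q$ is replaced by $N(f_n)(z)^q$, H\"older is applied, and \cite[Lemma~4.4]{PR} gives $\|N(f_n)\|_{L^p_\om}\asymp\|f_n\|_{A^p_\om}$. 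The finiteness of the resulting $L^{p/(p-q)}_\om$-norm comes directly from \cite[Theorem~3(iii)]{PRS} applied to $\tau_\f$, and dominated convergence makes the truncated version smaller than any $\varepsilon$. To make your argument work under the stated hypothesis $\om\in\DD$, you would have to replace Theorem~\ref{Theorem:cm}(iii) by this $\DD$-valid characterization, which essentially forces you into the paper's Fubini/maximal-function route.
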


\begin{proof}
Let $\{f_n\}$ be a bounded sequence in $A_\om^p$ such that $f_n\rightarrow0$ uniformly on compact subsets of $\D$. Since $\delta C_\f$ and $\delta C_\p$ are bounded from $A^p_\om$ to $L^q_{\nu}$ by the hypothesis, they are also compact by \cite[Theorem~3(iii)]{PRS}, and therefore
	\begin{equation}\label{eq51}
	\lim_{n\rightarrow\infty}\left(\|\delta f_n(\f)\|_{L^q_{\nu}}+\|\delta f_n(\p)\|_{L^q_{\nu}}\right)=0.
	\end{equation}
Let $0<r<R<1$, and denote $E=\{z\in\D:|\delta(z)|<r\}$ and $E'=\D\setminus E$. To prove the compactness of $C_{\varphi}-C_{\p}:A^p_\om\rightarrow L^q_\nu$, it suffices to show that
	$$
	\lim_{n\rightarrow\infty}(\|(C_\f-C_\p)(f_n)\chi_E\|_{L^q_{\nu}}+\|(C_\f-C_\p)(f_n)\chi _{E'}\|_{L^q_{\nu}})=0
	$$
since
	\begin{equation*}
	\|(C_\f-C_\p)(f_n)\|^q_{L^q_{\nu}}=\|(C_\f-C_\p)(f_n)\chi_E\|^q_{L^q_{\nu}}+\|(C_\f-C_\p)(f_n)\chi _{E'}\|^q_{L^q_{\nu}}.
	\end{equation*}
By using \eqref{Eq:easy} and \eqref{eq51}, it is easy to show that
	$$
	\lim_{n\rightarrow\infty}\|(C_\f-C_\p)(f_n)\chi _{E'}\|_{L^q_{\nu}}=0.
	$$
Further, by \eqref{Eq:estimate-sufficiency}, we have 	
	\begin{equation*}
	\|(C_\f-C_\p)(f_n)\chi_E\|^q_{L^q_{\nu}}\lesssim\int_\D|f_n(\z)|^q\frac{\varphi_*(|\delta |^q\nu)(\Delta(\z,R))}{(1-|\z|)^2}\,dA(\z).
	\end{equation*}
Let $\varepsilon>0$. Since the identity operator from $A^p_\om$ to $L^q_{\f_*(|\delta|^q\nu)}$ is bounded by the hypothesis, \cite[Theorem 3(iii)]{PRS} and the dominated convergence theorem imply the existence of an $R_0=R_0(\e)\in(0,1)$ such that
	\begin{equation}\label{eq52}
	\int_\D\left(\int_{\Gamma(z)\backslash\overline{D(0,R_0)}}\frac{\varphi_*(|\delta|^q\nu)(\Delta(\z,R))}
	{\om(T(\zeta))(1-|\z|)^2}\,dA(\z)\right)^\frac{p}{p-q}\om(z)\,dA(z)<\varepsilon^{\frac{p}{p-q}}.
	\end{equation}
Further, by the uniform convergence, there exists $N=N(\e)\in\mathbb{N}$ such that $|f_n(z)|<\varepsilon^{\frac1q}$ for all $n\ge N$ and $z\in\overline{D(0,R_0)}$. Therefore, for all $n\ge N$, by Fubini's theorem, H\"{o}lder's inequality, \cite[Lemma 4.4]{PR} and \eqref{eq52}, we have
	\begin{equation*}
	\begin{split}
	\|(C_\f-C_\p)(f_n)\chi_{E}\|^q_{L^q_{\nu}}&\lesssim \left(\int_{\overline{D(0,R_0)}}+\int_{\D\backslash\overline{D(0,R_0)}}\right)
	|f_n(\z)|^q\frac{\varphi_*(|\delta |^q\nu)(\Delta(\z,R))}{(1-|\z|)^2}\,dA(\z)\\
	&\lesssim \varepsilon+\int_\D\left(\int_{\Gamma(z)\backslash\overline{D(0,R_0)}}|f_n(\z)|^q\frac{\varphi_*(|\delta|^q\nu)(\Delta(\z,R))}
	{\om(T(\zeta))(1-|\z|)^2}\,dA(\z)\right)\om(z)\,dA(z)\\
	&\leq\varepsilon+\int_\D N(f_n)^q(z)\left(\int_{\Gamma(z)\backslash\overline{D(0,R_0)}}\frac{\varphi_*(|\delta|^q\nu)(\Delta(\z,R))}
	{\om(T(\zeta))(1-|\z|)^2}\,dA(\z)\right)\om(z)\,dA(z)\\
	&\leq\varepsilon\left(1+\|N(f_n)\|^q_{L^p_{\om}}\right)\asymp\varepsilon.
	\end{split}
	\end{equation*}
Therefore	$\lim_{n\rightarrow\infty}\|(C_\f-C_\p)(f_n)\chi_{E}\|_{L^q_{\nu}}=0$, and thus $C_\f-C_\p$ is compact from $A^p_\om$ to $L^q_\nu$.
\end{proof}

\section{Necessary conditions}\label{Sec:necessity}

In this section we establish necessary conditions for $C_\f-C_\p:A^p_\om\rightarrow L^q_\nu$ to be bounded or compact. In the case $p<q$ we work with the whole class $\DD$, but the arguments employed in the case $p\ge q$ rely strongly on the hypothesis $\om\in\DDD$. We begin with the case $p\le q$.

\begin{proposition} \label{proposition pq0}
Let either $0<p<q<\infty$ and $\om\in\DD$ or $p=q$ and $\om\in\DDD$, and let $\nu$ be a positive Borel measure on $\D$. Let $\f$ and $\p$ analytic self-maps of $\D$. If $C_\f-C_\p:A^p_\om\rightarrow L^q_\nu$ is bounded (resp. compact), then $\d C_\f$ and $\d C_\p$ are bounded (resp. compact) from $A_\om^p$ to $L^q_\nu$.
\end{proposition}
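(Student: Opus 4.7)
The plan is to reduce the conclusion about $\d C_\f$ to a Carleson testing condition on a pushforward measure, and then verify that condition by testing the boundedness of $C_\f-C_\p$ against a carefully chosen family of unit vectors in $A^p_\om$.

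First, as in the proof of Proposition~\ref{proposition pq11}, the change of variables identity $\|\d C_\f f\|_{L^q_\nu}^q = \|f\|^q_{L^q_{\f_*(|\d|^q\nu)}}$ reduces the boundedness (resp.\ compactness) of $\d C_\f:A^p_\om\to L^q_\nu$ to the assertion that $\mu_\f:=\f_*(|\d|^q\nu)$ is a $q$-Carleson measure (resp.\ vanishing $q$-Carleson measure) for $A^p_\om$. Under the present hypotheses on $p,q,\om$, \cite[Theorem~1(b),~(c)]{PelRatEmb} together with the compactness counterpart from \cite[Theorem~3]{PRS} characterize this by the geometric testing condition
$$
\mu_\f(S(a))\lesssim \om(S(a))^{q/p},\qquad a\in\D\setminus\{0\},
$$
with $\mu_\f(S(a))/\om(S(a))^{q/p}\to 0$ as $|a|\to 1^-$ in the compact case. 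The same reduction applies with $\p$ in place of $\f$.

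Second, to verify this testing condition I would fix $a\in\D$ and, for a sufficiently large exponent $M=M(\om,p)$, consider the paired test functions
$$
f_a(z)=\frac{1}{\om(S(a))^{1/p}}\left(\frac{1-|a|^2}{1-\overline{a}z}\right)^M,\qquad h_a(z)=\frac{a-z}{1-\overline{a}z}\,f_a(z).
$$
Standard kernel estimates valid for $\om\in\DD$, see \cite{PelSum14}, give $\sup_{a\in\D}(\|f_a\|_{A^p_\om}+\|h_a\|_{A^p_\om})\lesssim 1$, while $|f_a(\f(z))|\asymp \om(S(a))^{-1/p}$ on $\f^{-1}(\Delta(a,r))$ for any fixed $r\in(0,1)$. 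A direct calculation yields the Möbius identity
$$
\frac{a-\f(z)}{1-\overline{a}\f(z)}-\frac{a-\p(z)}{1-\overline{a}\p(z)}=\frac{(\p(z)-\f(z))(1-|a|^2)}{(1-\overline{a}\f(z))(1-\overline{a}\p(z))},
$$
which factors through $\d(z)$ after multiplying the numerator and denominator of $\d(z)$ by the kernel factors.

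Third, expanding
$$
h_a(\f(z))-h_a(\p(z))=\left(\frac{a-\f(z)}{1-\overline{a}\f(z)}-\frac{a-\p(z)}{1-\overline{a}\p(z)}\right)f_a(\f(z))+\frac{a-\p(z)}{1-\overline{a}\p(z)}\bigl(f_a(\f(z))-f_a(\p(z))\bigr)
$$
and combining with the identity above shows that on $\f^{-1}(\Delta(a,r))\cap\{|\d|\le\tau\}$ the difference $h_a\circ\f-h_a\circ\p$ dominates a constant multiple of $|\d(z)|\om(S(a))^{-1/p}$, modulo a term that is absorbed by $(C_\f-C_\p)f_a$. On the complementary set $\f^{-1}(\Delta(a,r))\cap\{|\d|>\tau\}$ either $\p(z)$ is also pseudohyperbolically close to $a$, in which case the same estimate applies, or it is pseudohyperbolically far from $a$, in which case $|f_a(\f(z))-f_a(\p(z))|\asymp \om(S(a))^{-1/p}\gtrsim |\d(z)|\om(S(a))^{-1/p}$. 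Testing the assumed $L^q_\nu$-bound of $C_\f-C_\p$ against $f_a$ and $h_a$ then yields
$$
\int_{\f^{-1}(\Delta(a,r))}|\d(z)|^q\,d\nu(z)\lesssim \om(S(a))^{q/p},
$$
and a routine covering of $S(a)$ by finitely many pseudohyperbolic discs (using $\om\in\DD$ for the comparability $\om(S(a'))\asymp\om(S(a))$ at nearby centres) upgrades this to the required bound on $\mu_\f(S(a))$. The compactness statement is obtained by replacing $f_a,h_a$ with sequences $f_{a_n},h_{a_n}$ corresponding to $|a_n|\to 1^-$, which converge to zero uniformly on compacta of $\D$.

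\textbf{Main obstacle.} The delicate point is the regime splitting $|\d(z)|\le\tau$ versus $|\d(z)|>\tau$ and the uniform control of the linearization error of $f_a$ along the pseudohyperbolic geodesic joining $\f(z)$ and $\p(z)$; moreover, in the endpoint $p=q$ the argument genuinely requires $\om\in\DDD$, since only the lower bound in~\eqref{Eq:characterization-D} supplies the comparability of $\om(S(a))$ with the $\om$-mass of a pseudohyperbolic disc of fixed radius centred at $a$ that is needed to pass from the estimate on $\Delta(a,r)$ to the estimate on $S(a)$.
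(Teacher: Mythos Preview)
Your two-test-function approach ($f_a$ and $h_a=\sigma_a f_a$ with $\sigma_a(w)=\frac{a-w}{1-\overline{a}w}$) is a legitimate alternative to the paper's route, but your write-up has two genuine gaps.

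\textbf{The dichotomy.} The ``far'' case claim that $|f_a(\f(z))-f_a(\p(z))|\asymp\om(S(a))^{-1/p}$ whenever $\p(z)\notin\Delta(a,R)$ is false: the kernel $b\mapsto(1-|a|^2)/(1-\overline{a}b)$ is \emph{not} small off $\Delta(a,R)$, and one can find $\p(z)$ with $\rho(a,\p(z))$ near $1$ yet $f_a(\p(z))$ arbitrarily close to $f_a(\f(z))$. Fortunately the splitting is unnecessary. From your identity and the decomposition of $h_a\circ\f-h_a\circ\p$ one gets, for every $z\in\f^{-1}(\Delta(a,r))$,
\[
|\sigma_a(\f(z))-\sigma_a(\p(z))|\,|f_a(\f(z))|\le |(C_\f-C_\p)h_a(z)|+|(C_\f-C_\p)f_a(z)|.
\]
Since $\sigma_a$ is a disc automorphism, $\rho(\sigma_a(\f),\sigma_a(\p))=|\d|$; combined with $|\sigma_a(\f(z))|<r$ this yields $|\sigma_a(\f(z))-\sigma_a(\p(z))|\ge(1-r)|\d(z)|$. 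Hence the pointwise bound $|\d(z)|\om(S(a))^{-1/p}\lesssim|(C_\f-C_\p)h_a|+|(C_\f-C_\p)f_a|$ holds on all of $\f^{-1}(\Delta(a,r))$, with no $\tau$-splitting.

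\textbf{The covering.} A Carleson square $S(a)$ cannot be covered by boundedly many pseudohyperbolic discs of fixed radius; the number at dyadic depth $k$ is $\asymp 2^k$. The geometric-series argument summing $2^k(2^{-k}\om(S(a)))^{q/p}$ does converge when $q>p$, but is not ``routine'' and collapses when $q=p$. The correct move is to invoke the Carleson characterization directly in its pseudohyperbolic-disc form: for $p<q$, \cite[Theorem~1(c)]{PelRatEmb} already states the condition as $\sup_a\mu(\Delta(a,r))/\om(S(a))^{q/p}<\infty$; for $p=q$ with $\om\in\DDD$ one uses \cite[Theorem~2]{LR2020} together with $\om(\Delta(a,r))\asymp\om(S(a))$.

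\textbf{Comparison with the paper.} The paper bypasses both issues by using a single test function $f_a$ and the inequality (from \cite{SE})
\[
\left|1-\Bigl(\tfrac{1-\overline{a}z}{1-\overline{a}w}\Bigr)^{\gamma}\right|\ge C|a|\,\rho(z,w),\qquad z\in\Delta(a,r),\ a,w\in\D,
\]
which gives $|f_a(\f(z))-f_a(\p(z))|\gtrsim|a|\,|\d(z)|\,\om(S(a))^{-1/p}$ on $\f^{-1}(\Delta(a,r))$ in one stroke. Your approach trades this analytic inequality for a M\"obius-invariance argument; both yield the same endpoint estimate $\f_*(|\d|^q\nu)(\Delta(a,r))\lesssim\om(S(a))^{q/p}$, after which the cited Carleson theorems finish the job.
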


\begin{proof}
Let first $p<q$ and $\om\in\DD$. We begin with the boundedness and show in detail that $\d C_\f$ is bounded from $A_\om^p$ to $L^q_\nu$. For each $a\in\D$, consider the function
	\begin{equation*}
  f_{a}(z)=\left(\frac{1-|a|}{1-\overline{a}z}\right)^{\gamma}\omega(S(a))^{-\frac{1}{p}},\quad z\in\D,
  \end{equation*}
induced by $\om$ and $0<\gamma,p<\infty$. Then \cite[Lemma~2.1]{PelSum14} implies that for each $\gamma=\gamma(\om,p)>0$ sufficiently large we have $\|f_{a}\|_{A^p_\om}\asymp1$ for all $a\in\D$. Fix such a $\gamma$. Since $C_\varphi-C_\psi$ is bounded, we have
  \begin{equation*}
	\begin{split}
  1&\asymp \|f_{a}\|^q_{A^p_\omega}
  \gtrsim\|(C_\varphi-C_\psi)(f_{a})\|^q_{L^q_\nu}\\
  &=\int_\D\left|\left(\frac{1-|a|}{1-\overline{a}\varphi(z)}\right)^{\gamma}-\left(\frac{1-|a|}{1-\overline{a}\psi(z)}\right)^{\gamma}\right|^q\frac{\nu(z)}{\omega(S(a))^{\frac{q}{p}}}\,dA(z)\\
  &=\int_\D \left|\frac{1-|a|}{1-\overline{a}\varphi(z)}\right|^{\gamma q}
	\left|1-\left(\frac{1-\overline{a}\varphi(z)}{1-\overline{a}\psi(z)}\right)^{\gamma}\right|^q
	\frac{\nu(z)}{\omega(S(a))^{\frac{q}{p}}}\,dA(z).
  \end{split}
  \end{equation*}
According to \cite[p.~795]{SE}, for each $0<\gamma<\infty$ and $0<r<1$ there exist a constant $C=C(\gamma,r)>0$ such that
  \begin{equation}\label{Eq:Old-Propositions}
  \left|1-\left(\frac{1-\overline{a}z}{1-\overline{a}w}\right)^\gamma\right|
	\ge C|a|\rho(z,w),\quad z\in\De(a,r),\quad a,w\in\D.
  \end{equation}
An application of this inequality gives
	\begin{equation*}
  1\gtrsim\int_{\varphi^{-1}(\De(a,r))}\frac{|a|^q|\delta(z)|^q}{(\omega(S(a)))^{\frac{q}{p}}}\nu(z)\,dA(z)
	=|a|^q\frac{\varphi_*(|\delta|^q\nu)(\De(a,r))}{(\omega(S(a)))^{\frac{q}{p}}}.
  \end{equation*}
It follows that $\varphi_*(|\delta|^q\nu)$ is a bounded $q$-Carleson measure for $A^p_\om$ by \cite[Theorem~1(c)]{PelRatEmb}, and hence $\d C_\f:A_\om^p\to L^q_\nu$ is bounded. The same argument shows that also $\d C_\p$ is bounded.

For the compactness statement, first observe that $f_{a}$ tends to zero uniformly on compact subsets of $\D$ as $|a|\rightarrow 1^-$. Then, if $C_\varphi-C_\psi$ is compact, we have $\lim_{|a|\rightarrow 1^-}\|(C_\varphi-C_\psi)(f_{a})\|_{L^q_\nu}=0$. By arguing as above we deduce
	$$
	\lim_{|a|\rightarrow 1^-} \frac{\varphi_*(|\delta|^q\nu)(\De(a,r))}{(\omega(S(a)))^{\frac{q}{p}}}=0.
  $$
Therefore $\delta C_\varphi:A^p_\omega\to L^q_\nu$ is compact by \cite[Theorem~3]{PRS}. The same argument shows that also $\delta C_\psi$ is compact.

Let now $p=q$ and $\om\in\DDD$. The statement follows from the proof above with the modification that \cite[Theorem~2]{LR2020}, valid for $\om\in\DDD$, is used instead of \cite[Theorem~1(c)]{PelRatEmb} and \cite[Theorem~3]{PRS}. The only extra step is to observe that for each $\om\in\DDD$ there exists $r=r(\om)\in(0,1)$ such that $\om(S(a))\asymp\om(\Delta(a,r))$ for all $a\in\D\setminus\{0\}$. This follows from \eqref{Eq:characterization-D}. With this guidance we consider the proposition proved.
\end{proof}

The next result establishes a counter part of Proposition~\ref{proposition pq0} when $p>q$.

\begin{proposition}\label{proposition:necessity-p>q}
Let $0<q<p<\infty$ and $\om\in\mathcal{D}$, and let $\nu$ be a positive Borel measure on $\D$. Let $\f$ and $\p$ be analytic self-maps of $\D$. If $C_\f-C_\p$ : $A^p_\om \rightarrow L^q_\nu$ is bounded, then $\d C_\f$ and $\d C_\p$ are both bounded from $A_\om^p$ to $L^q_\nu$.
\end{proposition}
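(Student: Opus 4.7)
My plan is to reduce the boundedness of $\delta C_\varphi:A^p_\om\to L^q_\nu$ to a Carleson-embedding statement, and then produce that Carleson condition by testing $C_\varphi-C_\psi$ against Khinchine-randomized atomic functions. Indeed, the measure-theoretic change of variables $\|\delta C_\varphi f\|^q_{L^q_\nu}=\int_\D|f|^q\,d\mu_\varphi$, with $\mu_\varphi=\varphi_*(|\delta|^q\nu)$, shows that $\delta C_\varphi:A^p_\om\to L^q_\nu$ is bounded if and only if $\mu_\varphi$ is a $q$-Carleson measure for $A^p_\om$, and by Theorem~\ref{Theorem:cm} this in turn is equivalent to $\Theta^\om_{\mu_\varphi}\in L^{p/(p-q)}_{\widetilde{\om}}$. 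The argument for $\delta C_\psi$ will be identical by symmetry.

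I would fix a $\rho$-lattice $\{z_k\}$ with $\rho$ small enough, and pick $M=M(\om,p)$ large enough so that the randomized atoms
\begin{equation*}
F_{a,t}(z)=\sum_k\varepsilon_k(t)a_k\left(\frac{1-|z_k|}{1-\overline{z_k}z}\right)^M\frac{1}{\om(T(z_k))^{1/p}},
\end{equation*}
with $\{\varepsilon_k(t)\}$ the Rademacher functions and $a=(a_k)\in\ell^p$, satisfy $\|F_{a,t}\|_{A^p_\om}\lesssim\|a\|_{\ell^p}$ uniformly in $t\in[0,1]$. Applying the hypothesized boundedness of $C_\varphi-C_\psi$, integrating in $t$ and Khinchine's inequality then should yield
\begin{equation*}
\int_\D\left(\sum_k|a_k|^2\,\om(T(z_k))^{-2/p}\,|G_k(z)|^2\right)^{q/2}d\nu(z)\lesssim\|a\|^q_{\ell^p},
\end{equation*}
where $G_k(z)$ denotes the difference of the two atomic kernels evaluated at $\varphi(z)$ and $\psi(z)$.

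For $r\in(0,1)$ so small that the pseudohyperbolic discs $\Delta(z_k,r)$ are pairwise disjoint, the factorization of $G_k$ together with $|(1-|z_k|)/(1-\overline{z_k}\varphi(z))|\asymp 1$ on $\varphi(z)\in\Delta(z_k,r)$ and the pointwise bound~\eqref{Eq:Old-Propositions} should give $|G_k(z)|\gtrsim|z_k||\delta(z)|$ whenever $\varphi(z)\in\Delta(z_k,r)$. Retaining only the $k$-th term in the $\ell^2$-sum, integrating over the mutually disjoint preimages $\varphi^{-1}(\Delta(z_k,r))$ and absorbing the harmless factor $|z_k|^q$ produces
\begin{equation*}
\sum_k|a_k|^q\,\frac{\mu_\varphi(\Delta(z_k,r))}{\om(T(z_k))^{q/p}}\lesssim\|a\|^q_{\ell^p}.
\end{equation*}
Setting $x_k=|a_k|^q$ this reads $\sum_kx_k\gamma_k\lesssim\|x\|_{\ell^{p/q}}$ for every nonnegative $x\in\ell^{p/q}$, whence duality places $(\gamma_k)$ in $\ell^{p/(p-q)}$. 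A standard lattice-to-integral discretization exploiting $\widetilde{\om}(z_k)(1-|z_k|)^2\asymp\om(S(z_k))\asymp\om(T(z_k))$ would then convert the resulting sum into $\int_\D\Theta^\om_{\mu_\varphi}(z)^{p/(p-q)}\widetilde{\om}(z)\,dA(z)$, closing the loop through Theorem~\ref{Theorem:cm}.

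The main technical obstacle I anticipate is the clean extraction of a single diagonal term from the $\ell^2$-sum produced by Khinchine's inequality: this step requires the preimages $\varphi^{-1}(\Delta(z_k,r))$ to be pairwise disjoint, which is secured by passing to a sufficiently separated sublattice of $\{z_k\}$ and a correspondingly small $r$. The remaining ingredients---the $A^p_\om$-norm bound on $F_{a,t}$, the pointwise estimate in~\eqref{Eq:Old-Propositions}, the duality step and the final lattice discretization---should be routine once the machinery for weights in $\DDD$ developed earlier in the paper is in place.
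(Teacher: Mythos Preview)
Your outline matches the paper's argument almost step for step: atomic test functions indexed by a $\rho$-lattice, Khinchine's inequality, the pointwise lower bound \eqref{Eq:Old-Propositions} producing $|G_k(z)|\gtrsim|z_k|\,|\delta(z)|$ on $\varphi^{-1}(\Delta(z_k,r))$, the duality $(\ell^{p/q})^*\simeq\ell^{p/(p-q)}$, and a final lattice-to-integral comparison closing via Theorem~\ref{Theorem:cm}. The one substantive deviation---and it is a genuine gap as written---is your plan to make the preimages $\varphi^{-1}(\Delta(z_k,r))$ pairwise \emph{disjoint} by shrinking $r$ below the covering radius and/or thinning $\{z_k\}$ to a separated sublattice. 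That does render the passage from $\bigl(\sum_k\cdots\bigr)^{q/2}$ to $\sum_k(\cdots)^{q/2}$ trivial, but it ruins the last step: once the discs $\Delta(z_k,r)$ no longer cover~$\D$, the discrete condition
\[
\sum_k\Bigl(\mu_\varphi(\Delta(z_k,r))\big/\omega(T(z_k))^{q/p}\Bigr)^{p/(p-q)}<\infty
\]
carries no information about the mass of $\mu_\varphi$ in the gaps, and you cannot dominate $\int_\D(\Theta^\omega_{\mu_\varphi})^{p/(p-q)}\widetilde{\omega}\,dA$ by that sum. (A measure supported entirely in $\D\setminus\bigcup_k\Delta(z_k,r)$ makes the sum zero while $\Theta^\omega_{\mu_\varphi}$ can be as large as you like.)

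The paper's remedy is to keep the full $\rho$-lattice with $r=\rho$, so that the discs $\Delta(z_k,\rho)$ cover~$\D$ with \emph{bounded} (not zero) overlap. The diagonal extraction is then handled by a case split: for $q\ge2$ the elementary inequality $\sum_kc_k^{q/2}\le\bigl(\sum_kc_k\bigr)^{q/2}$ suffices; for $0<q<2$ one applies H\"older's inequality together with the uniform bound $\sum_k\chi_{\varphi^{-1}(\Delta(z_k,\rho))}(z)\lesssim1$ coming from the finite overlap of the lattice discs. With the covering radius retained, the discretization then proceeds exactly as you sketch. Everything else in your proposal is correct.
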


\begin{proof}
Let $\{z_k\}_{k\in\N}$ be a $\r$-lattice such that it is ordered by increasing modulii and $z_k\neq0$ for all $k$. Then by \cite[Theorem~1]{JJK2} there exist constants $M=M(p,\om)>1$ and $C=C(p,\om)>0$ such that the function
	$$
	F(z)=\sum_kb_k\left(\frac{1-|z_k|}{1-\overline{z_k}z}\right)^M\frac{1}{(\omega(T(z_k)))^{\frac{1}{p}}},\quad z\in\D,
	$$
belongs to $A^p_{\omega}$ and satisfies $\|F\|_{A^p_\om}\le C\|b\|_{\ell^p}$ for all $b=\{b_k\}\in\ell^p$. Since $C_\f-C_\p$ : $A^p_\om \rightarrow L^q_\nu$ is bounded by the hypothesis, we deduce
	\begin{equation*}
	\begin{split}
	\|b\|_{\ell^p}^q
	&\gtrsim\|F\|_{A^p_{\om}}^q
	\gtrsim\int_{\D}|(C_{\varphi}-C_{\psi})\circ F(z)|^q\,d\nu(z)\\
	&=\int_{\D}\left|\sum_kb_k\left(\left(\frac{1-|z_k|}{1-\overline{z_k}\varphi(z)}\right)^{M}-
	\left(\frac{1-|z_k|}{1-\overline{z_k}\psi(z)}\right)^{M}\right)\frac{1}{(\omega(T(z_k)))^{\frac1p}}\right|^qd\nu(z),\quad b\in\ell^p.
	\end{split}
	\end{equation*}
We now replace $b_k$ by $b_kr_k(t)$, integrate with respect to $t$ from 0
to 1, and then apply Fubini's theorem and Khinchine's inequality to get
	$$
	\|b\|_{\ell^p}^q
	\gtrsim\int_{\D}\left(\sum_k|b_k|^2\left|\left(\frac{1-|z_k|}{1-\overline{z_k}\varphi(z)}\right)^{M}-
	\left(\frac{1-|z_k|}{1-\overline{z_k}\psi(z)}\right)^{M}\right|^2 \frac{1}{(\omega(T(z_k)))^{\frac2p}}\right)^{\frac q2}d\nu(z),\quad b\in\ell^p.
	$$
By applying \eqref{Eq:Old-Propositions} and the estimate $|1-\overline{z_k}z|\asymp1-|z_k|$, valid for all $z\in\Delta(z_k,\r)$ and $k\in\N$, we obtain
	\begin{align*}
	\left|\left(\frac{1-|z_k|}{1-\overline{z_k}\varphi(z)}\right)^{M}-
	\left(\frac{1-|z_k|}{1-\overline{z_k}\psi(z)}\right)^{M}\right|
	&=\left|\frac{1-|z_k|}{1-\overline{z_k}\varphi(z)}\right|^{M}
	\left|1-\left(\frac{1-\overline{z_k}\varphi(z)}{1-\overline{z_k}\psi(z)}\right)^{M}\right|\\
	&\gtrsim|z_k||\delta(z)|\left|\frac{1-|z_k|}{1-\overline{z_k}\varphi(z)}\right|^{M}\chi_{\varphi^{-1}(\Delta(z_k,\r))}(z)\\
	&\asymp|z_k||\delta(z)|\chi_{\varphi^{-1}(\Delta(z_k,\r))}(z),\quad z\in\D,\quad k\in\N,
	\end{align*}
and hence
	\begin{equation}\label{Eq:1111}
	\|b\|_{\ell^p}^q
	\gtrsim|z_1|^q\int_{\D}\left(\sum_{k}|b_k|^2|\delta(z)|^2\frac{1}{(\omega(T(z_k)))^{\frac2p}}\chi_{\varphi^{-1}(\Delta(z_k,\r))}(z)\right)^{\frac{q}{2}}d\nu(z).
	\end{equation}
If $q\ge2$ then the inequality $\sum_j{c_j^x}\le\left(\sum_jc_j\right)^x$, valid for all $c_j\ge0$ and $x\ge1$, imply
	\begin{equation}\label{2222}
	\begin{split}
	\|b\|_{\ell^p}^q
	&\gtrsim \int_{\D}\left(\sum_{k}|b_k|^q|\delta(z)|^q\frac{1}{(\omega(T(z_k)))^{\frac{q}p}}\chi_{\varphi^{-1}(\Delta(z_k,\r))}(z)\right)d\nu(z).
	\end{split}
	\end{equation}
To get the same estimate for $0<q<2$ we apply H\"{o}lder's inequality. It together with the fact that the number of discs $\Delta(z_k,r)$ to which each $\vp(z)$ may belong to is uniformly bounded yields
	\begin{equation*}
	\begin{split}
	&\int_{\D}\left(\sum_{k}|b_k|^q|\delta(z)|^q\frac{1}{(\omega(T(z_k)))^{\frac{q}{p}}}
	\chi_{\varphi^{-1}(\Delta(z_k,\r))}(z)\right)d\nu(z)\\
	&\quad\le\int_{\D}\left(\sum_{k}|b_k|^2|\delta(z)|^2\frac{1}{(\omega(T(z_k)))^{\frac{2}{p}}}
	\chi_{\varphi^{-1}(\Delta(z_k,r))}(z)\right)^{\frac{q}{2}}\\
	&\qquad\cdot\left(\sum_{k}\chi_{\varphi^{-1}(\Delta(z_k,\r))}(z)\right)^{1-\frac{q}{2}}d\nu(z)\\
	&\quad\lesssim\int_{\D}\left(\sum_{k}|b_k|^2|\delta(z)|^2\frac{1}{(\omega(T(z_k)))^{\frac{2}{p}}}
	\chi_{\varphi^{-1}(\Delta(z_k,\r))}(z)\right)^{\frac{q}{2}}d\nu(z).
	\end{split}
	\end{equation*}
Thus \eqref{2222} holds for each $0<q<\infty$. By using Fubini's theorem we now deduce	
	\begin{equation*}
	\begin{split}
	\|b\|_{\ell^p}^q
	&\gtrsim\sum_{k}|b_k|^q\frac{1}{(\omega(T(z_k)))^{\frac{q}{p}}}\int_{\varphi^{-1}(\Delta(z_k,\r))}|\delta(z)|^q d\nu(z)\\
	&=\sum_{k}|b_k|^q\frac{\varphi_{*}(|\delta|^q \nu)(\Delta(z_k,\r))}{{(\omega(T(z_k)))^{\frac{q}{p}}}},\quad b\in\ell^p.
	\end{split}
	\end{equation*}
Therefore the sequence
	$$
	\left\{\frac{\varphi_{*}(|\delta|^q \nu)(\Delta(z_k,\r))}{{(\omega(T(z_k)))^{\frac{q}{p}}}}\right\}_{k\in\N}
	$$
belongs to $(\ell^{\frac{p}{q}})^*\simeq\ell^{\frac{p}{p-q}}$, and consequently
 $$
 \sum_k\left(\frac{\varphi_{*}(|\delta|^q\nu)(\Delta(z_k,\r))}{(\omega(T(z_k)))^{\frac{q}{p}}}\right)^{\frac{p}{p-q}}<\infty.
 $$
Pick up an $r=r(\rho)\in(0,1)$ such that $\Delta(z,\rho)\subset\Delta(z_k,r)$ for all $z\in\Delta(z_k,\rho)$ and $k\in\N$. The right hand inequality in \eqref{Eq:characterization-D} shows that $\widehat{\om}(z)\asymp\widehat{\om}(z_k)$ and $\omega(S(z))\asymp\omega(S(z_k))$ for all $z\in\Delta(z_k,\r)$ and $k\in\N$. Then, as $\{z_k\}_{k\in\N}$ is a $\r$-lattice, we deduce
	\begin{equation*}
	\begin{split}
	&\int_{\D}\left(\frac{\varphi_{*}(|\delta|^q \nu)(\Delta(z,\rho))}{\omega(S(z))}\right)^{\frac{p}{p-q}}\widetilde{\omega}(z)dA(z)\\
	&\quad\asymp\sum_k\int_{\Delta(z_k,\rho)}\left(\frac{\varphi_{*}(|\delta|^q \nu)(\Delta(z,\rho))}{\omega(S(z))}\right)^{\frac{p}{p-q}}
	\widetilde{\omega}(z)dA(z)\\
	&\quad\lesssim\sum_k\left(\frac{\varphi_{*}(|\delta|^q\nu)(\Delta(z_k,r))}{\omega(S(z_k))}\right)^{\frac{p}{p-q}}\widetilde{\omega}(z_k)(1-|z_k|)^2\\
	&\quad\lesssim\sum_k\left(\frac{\varphi_{*}(|\delta|^q\nu)(\Delta(z_k,r))}{(\omega(S(z_k)))^\frac{q}{p}}\right)^{\frac{p}{p-q}}
	\lesssim\sum_k\left(\frac{\varphi_{*}(|\delta|^q\nu)(\Delta(z_k,\r))}{(\omega(T(z_k)))^\frac{q}{p}}\right)^{\frac{p}{p-q}}<\infty.
	\end{split}
	\end{equation*}
Therefore $\varphi_{*}(|\delta|^q \nu)$ is a $q$-Carleson measure for $A^p_{\omega}$ by Theorem~\ref{Theorem:cm}. For the same reason, $\psi_{*}(|\delta|^q \nu)$ is a $q$-Carleson measure for $A^p_{\omega}$. The proof is complete.
\end{proof}

\section{Proofs of main theorems}\label{final}

Here we shortly indicate how the main results stated in the introduction easily follow from the propositions proved in the previous two sections.

\medskip
\noindent\emph{Proof of} Theorem~\ref{Theorem:main-1}.
The theorem follows by Propositions~\ref{proposition pq1} and \ref{proposition:necessity-p>q}. Namely, if $\delta C_\varphi$ and $\delta C_\varphi$ are bounded from $A^p_\omega$ to $L^q_\nu$, then $C_\vp-C_\psi:A^p_\om\to L^q_\nu$ is compact, and thus bounded as well, by Proposition~\ref{proposition pq1}. Conversely, if $C_\vp-C_\psi:A^p_\om\to L^q_\nu$ is bounded, then $\delta C_\varphi$ and $\delta C_\varphi$ are bounded by Proposition~\ref{proposition:necessity-p>q}.
\hfill$\Box$

\medskip
\noindent\emph{Proof of} Theorem~\ref{Theorem:main-2}.
The theorem is an immediate consequence of Propositions~\ref{proposition pq11} and \ref{proposition pq0}.
\hfill$\Box$

\end{document}